\numberwithin{equation}{section}
\DeclareMathOperator{\Tr}{Tr}
\newcommand{\N}{\mathbb{N}}
\newcommand{\Rbb}{\mathbb{R}}
\newcommand{\e}{\mathrm{e}}
\let\d\relax
\newcommand{\d}{\mathrm{d}}
\newcommand{\A}{\mathbf{A}}
\renewcommand{\q}{\mathbf{q}}
\renewcommand{\u}{\mathbf{u}}
\newcommand{\X}{\mathbf{X}}
\newcommand{\G}{\mathbf{G}}
\renewcommand{\epsilon}{\varepsilon}
\renewcommand{\phi}{\varphi}
\renewcommand{\P}{\mathds{P}}
\newcommand{\E}{\mathds{E}}
\newcommand{\bma}{\begin{bmatrix}}
\newcommand{\ema}{\end{bmatrix}}
\def\bet{\begin{thm}}
\def\eet{\end{thm}}
\def\bel{\begin{lem}}
\def\eel{\end{lem}}
\def\bas{\begin{ass}}
\def\eas{\end{ass}}
\def\bed{\begin{defn}}
\def\eed{\end{defn}}
\def\bep{\begin{prop}}
\def\eep{\end{prop}}
\def\beq{\begin{equation}}
\def\eeq{\end{equation}}
\def\bea{\begin{equation*}}
\def\eea{\end{equation*}}
\def\bex{\begin{ex}}
\def\eex{\end{ex}}
\def\bp{\begin{proof}}
\def\ep{\end{proof}}
\def\benr{\begin{enumerate}[label=(\roman*)]}
\def\eenr{\end{enumerate}}
\newcommand{\unn}[2]{\left[\!\left[#1,#2\right]\!\right]}
\newtheorem{ccounter}{ccounter}[section]
\newtheorem{thm}[ccounter]{Theorem}
\newtheorem{lem}[ccounter]{Lemma}
\newtheorem{defn}[ccounter]{Definition}
\newtheorem{prop}[ccounter]{Proposition}
\newtheorem{ass}[ccounter]{Assumption}
\newtheorem{ex}[ccounter]{Example}
\theoremstyle{definition}
\newtheorem{rmk}[ccounter]{Remark}
\titleformat{\section}[block]{\normalfont\filcenter}{\large\Roman{section}.}{.7em}{\large\scshape}
\titleformat{\subsection}[runin]{\normalfont}{\large \bf \thesubsection .}{.5em}{\bf}[.]
\titleformat{\subsubsection}[runin]{\normalfont}{\bf \thesubsubsection .}{.5em}{\bf}[.]
\titleformat*{\paragraph}{\itshape\mdseries}
\begin{document}
\tikzset{every node/.style={circle, minimum size=.1cm, inner sep = 2pt, scale=1}}

\title{\vspace{-5ex}\bfseries\scshape{Convergence of local eigenvector processes of generalized Wigner matrices}}
\author{L. \textsc{Benigni}\thanks{Supported in part by a Natural Sciences and Engineering Research Council of Canada (NSERC) RGPIN 2023-03882 \& DGECR 2023-00076.}\\\vspace{-0.15cm}\footnotesize{\it{Universit\'e de Montr\'eal}}\\\footnotesize{\it{lucas.benigni@umontreal.ca}}\and M. \textsc{Rezaei Feyzabady}\\\vspace{-0.15cm}\footnotesize{\it{Universit\'e de Montr\'eal}}\\\footnotesize{\it{mohammadreza.rezaei.feyzabady@umontreal.ca}}}
\date{}
\maketitle
\abstract{We prove convergence of eigenvector processes of the form $(\sqrt{N}\langle \u_k,A_t\u_k\rangle)_{t\in[0,1]}$ where $\u_k$ is a bulk eigenvector of generalized Wigner matrices and $(A_t)$ a family of symmetric matrices with bounded norm and H\"{o}lder regularity. We give explicit examples of limiting processes and prove that a large class of Gaussian process with H\"{o}lder-continuous covariance function can be obtained as such a limit using its Karhunen--Lo\`eve expansion.
The proof is based on the multi-dimensional convergence proved in \cite{benigni2022fluctuations} and a tightness criterion proved using H\"{o}lder regularity of the observables.
}

\section{Introduction and main results} The study of universal behavior in local eigenvalue and eigenvector statistics of large random matrices has witnessed remarkable progress in recent years. The modern theory originates with the seminal works \cites{erdHos2011universality, tao2011random}, which established the first proofs of universality for eigenvalue statistics of Wigner matrices—large symmetric or Hermitian random matrices with independent entries. These initial results were later refined with stronger forms of eigenvalue control, including rigidity and gap universality \cites{erdHos2012rigidity, erdHos2015gap, bourgade2016fixed}.

The understanding of eigenvectors in the delocalized phase has also advanced significantly. A fundamental step was the proof of delocalization: for any $D>0$, there exists $C>0$ such that
\[
\sup_{\q\in\mathbb{S}^{N-1}}\P\left(\sup_{k\in\unn{1}{N}} \big|\langle \q,\u_k\rangle\big| \leqslant \sqrt{\frac{C\log N}{N}} \right)\leqslant N^{-D},
\]
a result obtained through a series of works \cites{erdHos2009local, erdHos2009semicircle, isotropic1, vu2015random, isotropic2, benigni2022optimal}.

Beyond delocalization, attention has turned to finer descriptions of eigenvector entries. For a fixed $r\in\N$, given unit vectors $(\q_i)_{i=1}^r$ and indices $k_1,\dots,k_r\in\unn{1}{N}$, one expects
\[
\left( \sqrt{N}\langle\q_1,\u_{k_1}\rangle,\dots,\sqrt{N}\langle \q_r,\u_{k_r}\rangle \right) \xrightarrow[N\to\infty]{(d)} \left(\mathcal{N}_1,\dots,\mathcal{N}_r\right),
\]
where $(\mathcal{N}_i)_{i=1}^r$ are independent standard Gaussian random variables. While the general form of this entrywise convergence remains open, substantial progress has been made \cites{knowles2013eigenvector, tao2012random, bourgade2017eigenvector, marcinek2022high}.

A stronger manifestation of delocalization is encapsulated in the Eigenstate Thermalization Hypothesis (ETH). For a deterministic matrix $A$ of bounded norm, ETH asserts that for any $\varepsilon>0$ and $D>0$,
\[
\P\left( \sup_{k\in\unn{1}{N}}\Big| \langle \u_k,A\u_\ell \rangle - \delta_{k\ell}\langle A\rangle \Big| \geqslant N^{\varepsilon}\sqrt{\frac{\langle A^2\rangle}{N}} \right)\leqslant N^{-D}, \quad \text{where } \langle A\rangle=\tfrac{1}{N}\Tr(A).
\]
This conjecture has recently been confirmed in a series of works \cites{cipolloni2021eigenstate, cipolloni2022rank, cipolloni2023eigenstate} with earlier contributions for specific observables $A$ in \cites{bourgade2017eigenvector, bourgade1807random,benigni2021fermionic, benignilopatto}.

Even finer results address the fluctuations of ETH quantities. Specifically, for $k,\ell\in\unn{1}{N}$,
\[
\sqrt{\frac{N}{(1+\delta_{k\ell})\langle {\mathring{A}}^2\rangle}} \left( \langle \u_k,A\u_\ell\rangle-\delta_{k\ell}\langle A\rangle \right) \xrightarrow[N\to\infty]{(d)} \mathcal{N}(0,1), \quad \text{with }\; \mathring{A}=A-\langle A\rangle \mathrm{Id}_N,
\]
as proved in \cites{cipolloni2022normal, benigni2022fluctuations, benigni2023fluctuations, he2025extremal}.
In the present work, we remain within this framework and investigate fluctuations at the process level. More precisely, we study the convergence in the space of c\`adl\`ag functions on $[0,1]$ of the process
\begin{equation}\label{eq:defX}
\X_{k,\ell}^{A,N} = \X_{k,\ell}^{\A} \;=\; \left(\sqrt{\frac{N}{1+\delta_{k\ell}}}\,\langle \u_k,A_t \u_\ell\rangle\right)_{t\in[0,1]}
\end{equation}
for a family of traceless symmetric matrices $\A = (A_t)_{t\in[0,1]}$ with bounded norm.

The study of eigenvector processes has its origins in \cite{silverstein1981describing}, where the author established several properties of eigenvectors that strongly suggested a close connection with Haar-distributed eigenbases on the orthogonal group, reflecting a robust form of delocalization. However, the processes considered in \cite{silverstein1981describing} are inherently \emph{global}: they involve sums over a macroscopic number of eigenvectors, rather than sums of projections of a finite family of eigenvectors. As a consequence, the property suggested there relies on a four-moment universality principle, which is indispensable for handling such global quantities. By contrast, the processes we study here are \emph{local} in nature, depending only on finitely many eigenvectors. This distinction allows us to dispense with any additional moment assumptions on the distribution of the matrix entries, in particular the fourth-moment condition required in the global setting. Later works include \cite{donati2012truncations} which proves the following convergence of the bivariate eigenvector process for eigenvectors of the Gaussian Orthogonal or Unitary Ensemble, 
\[
\left(\sum_{\alpha=1}^{\lfloor Ns\rfloor}\sum_{k=1}^{\lfloor Nt \rfloor}\left(\vert \u_k(\alpha)\vert^2-\frac{1}{N}\right)\right)_{s,t\in[0,1]}
\xrightarrow[N\to\infty]{(d)} \mathrm{BBB}
\]
where $\mathrm{BBB}$ is the so-called bivariate Brownian bridge. A universality result for this process has been obtained in \cite{benaych2012universality} for Wigner matrices whose fourth moment equals the fourth-moment of a Gaussian random variable. They also prove interestingly in \cite{benaych2012universality}, that if the fourth moment is different, the limit is not given by the bivariate Brownian bridge but by a different process showing the lack of universality for this \emph{global} process. Finally, in \cite{bao2014universality}, the authors consider for a fixed deterministic vector $\q\in\mathbb{S}^{N-1}$ such that $\Vert \q\Vert_\infty\to 0$, the convergence 
\begin{equation}\label{eq:dualconvergence}
\left(
\sqrt{\frac{N}{2}}\sum_{k=1}^{\lfloor Nt\rfloor}\left(
    \langle \q,\u_k\rangle^2 -\frac{1}{N}
\right)
\right)_{t\in[0,1]}\xrightarrow[N\to\infty]{(d)}\mathrm{BB}
\end{equation}
where $\mathrm{BB}$ is the Brownian bridge under the same fourth moment assumption.

We consider in this paper $N\times N$ generalized Wigner matrices given by the following definition 
\begin{defn}[Generalized Wigner matrices]
    $W$ is a generalized Wigner matrix if it is a $N\times N$ real symmetric or complex Hermitian matrix such that its entries $(w_{ij})_{1\leqslant i\leqslant j\leqslant N}$ are independent, centered, of variance $s_{ij}$ such that there exist $c$ and $C>0$ such that for all $i,j\in\unn{1}{N}$,
    \[
    \frac{c}{N}\leqslant s_{ij}\leqslant \frac{C}{N}\quad \text{and}\quad \sum_{i=1}^N s_{ij}=1.
    \]
\end{defn}
To ease notation and readability, we restrict attention to real symmetric matrices in this note. We now give our two main results 
\begin{thm}\label{theo:main1}
    Let $p\in\N$ and $\alpha\in(0,1)$, for $i\in\unn{1}{p}$ consider $\A_i=(A_{i,t})_{t\in[0,1]}$ be a family of traceless symmetric matrices with bounded norm and such that for any $t\in(0,1]$ there exists $\delta>0$ such that $\langle A_{i,t}^2\rangle \geqslant N^{-1+\delta}$ and such that $A_{i,0}=0$. Let $\{(k_i,\ell_i)\}_{i=1}^p$ be a set of distinct pairs of indices such that for any $i\in\unn{1}{p}$, $k_i,\ell_i\in\unn{\alpha N}{(1-\alpha)N}$. Suppose that there exists $N_0\in \N,$ such that for any $i\in\unn{1}{p},$ there exists $\gamma_i>0$ and $L_i>0$, such that for any $N\geqslant N_0$, and any $t,s\in[0,1]$, 
    \[
    \langle 
        \left(
            A_{i,t}-A_{i,s}
        \right)^2
    \rangle
    \leqslant L_i\vert t-s\vert^{\gamma_i}
        \quad\text{and}\quad 
    \langle A_{i,s}A_{i,t}\rangle \xrightarrow[N\to\infty]{} K_i(t,s)    
    \]
    for some continuous positive-type\footnote{We recall that $C_i$ is of positive type if for any $q\in\N$, $t_1,\dots,t_q\in[0,1],$ and any $a_1,\dots,a_q\in \Rbb$ we have $\sum_{k,\ell=1}^qa_ka_\ell C_i(t_k,t_\ell)\geqslant 0$} function $K_i$. Then we have the convergence 
    \[
    \left(
        \X_{k_1,\ell_1}^{\A_1},\dots, \X_{k_p,\ell_p}^{\A_p}
    \right)
    \xrightarrow[N\to\infty]{(d)} \left(
        \G_1,\dots,\G_p
    \right)
    \]
    where $\X_{k,\ell}^\A$ is defined in \eqref{eq:defX} and $(\G_i)_{i=1}^p$ is an independent family of centered Gaussian processes with respective covariance $K_i$ and such that $G_{i}(0)=0$.  
\end{thm}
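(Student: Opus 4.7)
The plan is to prove convergence in $D([0,1])^p$ via the classical two-step route: convergence of the finite-dimensional distributions (fdd) together with tightness of the joint process in the product topology. Since the maps $t\mapsto A_{i,t}$ are H\"older continuous in the normalized Hilbert--Schmidt norm and the limit Gaussian processes will have continuous sample paths (by Kolmogorov continuity, the regularity of $K_i$ being inherited from the bound $L_i|t-s|^{\gamma_i}$), it is in fact more natural to work directly in $C([0,1])^p$.

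For the fdd step, fix arbitrary times $t_1,\dots,t_q\in[0,1]$ and observe that the vector
\[
\Big( \X^{\A_i}_{k_i,\ell_i}(t_j) \Big)_{1\le i\le p,\,1\le j\le q}
\]
is nothing but the rescaled quadratic forms $\sqrt{N/(1+\delta_{k_i\ell_i})}\,\langle \u_{k_i}, A_{i,t_j}\u_{\ell_i}\rangle$ in the $pq$ traceless observables $A_{i,t_j}$ paired against bulk eigenvectors. The multi-dimensional CLT of \cite{benigni2022fluctuations} applies and returns a centered Gaussian vector whose limit covariance is $\langle A_{i,s}A_{i,t}\rangle$ on coordinates sharing the same unordered pair $(k_i,\ell_i)$ and zero across distinct pairs. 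Combining this with the hypotheses $\langle A_{i,s}A_{i,t}\rangle\to K_i(s,t)$ and $A_{i,0}=0$ recovers exactly the fdd of the independent Gaussian family $(\G_i)$ with $\G_i(0)=0$, and in particular the distinctness of the pairs $(k_i,\ell_i)$ produces the asymptotic independence across the $i$ index.

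For tightness I would appeal to Kolmogorov's moment criterion: since $\X^{\A_i}_{k_i,\ell_i}(0)=0$, it suffices to exhibit, for each $i$, an integer $m_i>1/\gamma_i$ and a bound
\[
\E\!\left[ \Big| \X^{\A_i}_{k_i,\ell_i}(t) - \X^{\A_i}_{k_i,\ell_i}(s) \Big|^{2m_i} \right] \;\leq\; C_{m_i}\,\langle (A_{i,t}-A_{i,s})^2 \rangle^{m_i} \;\leq\; C_{m_i}\,L_i^{m_i}\,|t-s|^{m_i\gamma_i}.
\]
The left-hand inequality is a higher-moment analog of the variance bound already underlying \cite{benigni2022fluctuations}, applied to the single traceless matrix $A_{i,t}-A_{i,s}$; the right-hand inequality is the assumed H\"older regularity. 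Since $m_i\gamma_i>1$, this gives tightness of the $i$-th coordinate in $C([0,1])$, and tightness of the joint process in the product space follows coordinatewise.

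The principal technical obstacle is the higher-moment increment bound: the CLT of \cite{benigni2022fluctuations} is stated as convergence in distribution and in principle only controls the second moment, so to close Kolmogorov's criterion one must revisit the cumulant expansion or Dyson Brownian motion comparison used there and track the $(2m)$-th moment, with an explicit dependence on $\langle (A_{i,t}-A_{i,s})^2\rangle$ uniform in $(s,t)$. Once this is established, the rest is a direct application of standard tightness theory combined with the previously cited multi-dimensional CLT.
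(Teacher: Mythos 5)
Your outline matches the paper's: finite-dimensional convergence via the multi-dimensional moment result of \cite{benigni2022fluctuations}, followed by a moment-based tightness criterion using the H\"older regularity of $t\mapsto A_{i,t}$ in the normalized Hilbert--Schmidt norm. The fdd step is essentially identical to the paper's, including the correct observation that distinctness of the pairs $(k_i,\ell_i)$ produces asymptotic independence across $i$.

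The one place your proposal parts ways with the paper is in how the tightness estimate is packaged. You invoke Kolmogorov's criterion in $C([0,1])$; the paper instead works in $D([0,1])$ and implements Billingsley's criterion by an explicit dyadic chaining argument, which is the proof of Kolmogorov's criterion unwound. Concretely, the paper bounds $\P\bigl(|X^{\A,N}_{k,\ell}(s^{(j)}_{n+1})-X^{\A,N}_{k,\ell}(s^{(j)}_{n})|>\varepsilon_j\bigr)$ via Markov with a $(2q)$-th moment, sums the resulting geometric series over the scales $j$, and on the complementary event telescopes the increments along the nested dyadic approximations of any $s\in[t,t+\delta]$. The two routes are equivalent in substance, so this is a cosmetic difference; the paper's choice of $D([0,1])$ gives it slightly more flexibility for natural examples where $A_t$ has jumps, although under the stated H\"older hypothesis the prelimit paths are in fact continuous (with a modulus that deteriorates in $N$), so your $C([0,1])$ framing is also admissible.

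The item you single out as the ``principal technical obstacle'' is largely a misreading of the cited input. Theorem~2.2 of \cite{benigni2022fluctuations} is not a distributional CLT that only controls the variance: it states that the vector $\bigl(\langle A_i^2\rangle^{-1/2}X^{A_i}_{k_i,\ell_i}\bigr)_i$ is approximated \emph{in the sense of moments} by a Gaussian field. Applied to the single traceless matrix $\Delta=A_{i,t}-A_{i,s}$, this gives, for $N$ large, $\E\bigl[|X^{\Delta,N}_{k,\ell}|^{2q}\bigr]\leqslant C_q\,\langle\Delta^2\rangle^{q}$, i.e.\ exactly the higher-moment increment bound you need, without revisiting the cumulant expansion or the DBM comparison. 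The paper uses this directly. What does deserve a comment, and what your remark about ``explicit dependence \dots uniform in $(s,t)$'' touches on, is that Theorem~2.2 is stated for finitely many fixed matrices with the normalization lower bound $\langle A^2\rangle\geqslant N^{-1+\delta}$; applying it along the whole dyadic chain requires the approximation to hold with a constant and a threshold $N_1$ that are uniform over the uncountable family of increments $A_{i,t}-A_{i,s}$, and the lower-bound hypothesis is imposed on $\langle A_{i,t}^2\rangle$, not on $\langle(A_{i,t}-A_{i,s})^2\rangle$. The paper asserts this uniformity rather tersely; if you intend to write out a complete argument, that is where the genuine extra care is needed, either by tracking the error terms of the moment approximation to get uniform control or by supplementing with an ETH-type a priori bound in the regime where $\langle\Delta^2\rangle$ is too small for the moment theorem to apply as stated.
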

We remark that we put $A_0=0$ for convenience but we would actually simply need that the family $\{X^{\A_i}_{k_i,\ell_i}(0)\}$ is tight for our tightness criterion.
The proof of this theorem follows the classical approach for establishing convergence of stochastic processes, combining the multidimensional convergence framework of \cite{benigni2022fluctuations} with a tightness criterion. We also note that we give our result for bulk eigenvectors but we believe the result to be true for any eigenvector in the context of generalized Wigner matrices. However, the equivalent of Theorem \ref{theo:benignicipolloni} is not available at the edge, Gaussian fluctuations in the ETH have been studied in \cites{benigni2023fluctuations,he2025extremal} but not jointly for the same eigenvectors and different observables $A$ which is what we need here to compute the finite-dimensional distribution of the process.

We also give a proposition stating that we can represent a large class of centered Gaussian process with H\"{o}lder-continuous covariance on $[0,1]$ as the limit of a process $\X_{k,\ell}^\A$. For $\G$ a centered Gaussian process with continuous covariance $K$ on $[0,1]$ we have that $K$ is a Mercer kernel. Thus, if $(\psi_k)_{k\geqslant 1}$ is an orthonormal basis of $L^2([0,1])$ of eigenfunctions in the sense that 
    \[
    \int_0^1 K(s,t)\psi_k(s)\d s = \lambda_k\psi_k(t)
    \] 
    then the Karhunen--Lo\`eve decomposition of $\G$ is the convergence in $L^2$ and uniform in $t\in[0,1]$ of the series 
    \[
    G_t = \sum_{k=1}^{+\infty} Z_k \psi_k(t)\quad\text{with}\quad Z_k = \int_0^1 G_s \psi_k(s)\d s
    \]
    and we have that for $k,\ell \in\N$,
    $\E\left[Z_k\right]=0$ and $\E\left[Z_kZ_\ell\right]=\delta_{k\ell}\lambda_k.$ In particular, this gives the decomposition of the covariance 
    \[
    K(s,t) = \E\left[G_tG_s\right] = \sum_{k\geqslant 1}\lambda_k\psi_k(s)\psi_k(t).
    \]
    The next proposition states that any centered Gaussian process with a mild regularity assumption on its Karhunen--Lo\`eve decomposition can be represented as a limiting eigenvector process.
\begin{prop}\label{prop:main2}
Let $\alpha\in(0,1)$. Let $\G$ be a centered Gaussian process with continuous covariance $K$ on $[0,1].$ If $(\lambda_k,\psi_k)$ is the Mercer decomposition of $K$, we suppose that 
\[
\sum_{k\geqslant 1}\lambda_k\Vert \psi_k\Vert_\infty^2 <\infty    
\]
and that there exists $L,\gamma>0$ such that for every $s,t\in[0,1],$
\begin{equation}\label{eq:covlip}
\vert K(s,s)+K(t,t)-2K(s,t)\vert \leqslant L\vert t-s\vert^\gamma.
\end{equation}
Then there exists $\A$ a family of traceless symmetric matrices of bounded spectral norm such that for any $k,\ell\in\unn{\alpha N}{(1-\alpha)N}$ we have 
\[
\X_{k,\ell}^\A \xrightarrow[N\to\infty]{(d)}\G.
\]
\end{prop}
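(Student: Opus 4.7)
The plan is to realize $\G$ explicitly as the limit of a single eigenvector process $\X_{k,\ell}^\A$ by choosing $\A$ in a block-diagonal form that transfers the Mercer decomposition of $K$ to the normalized trace inner product on matrices. I start from $K(s,t)=\sum_{k\geqslant 1}\lambda_k\psi_k(s)\psi_k(t)$ and set $S=\sum_k\lambda_k\Vert\psi_k\Vert_\infty^2$ together with $a_k=\lambda_k\Vert\psi_k\Vert_\infty^2/S$. Discarding indices with $a_k=0$ (which contribute nothing to $K$), we may assume $a_k>0$ for every $k$, so that $\sum_k a_k=1$. For each $N$ I pick a truncation level $K_N\to\infty$ slowly enough that $\lfloor Na_k/2\rfloor\geqslant 1$ for all $k\leqslant K_N$, partition $\unn{1}{N}$ into disjoint blocks $J_1,\dots,J_{K_N}$ (plus a residual set) of even sizes $|J_k|=2\lfloor Na_k/2\rfloor$, and on each block fix balanced signs $\epsilon_{k,i}\in\{\pm 1\}$ with $\sum_{i\in J_k}\epsilon_{k,i}=0$. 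Letting $B_k$ be the diagonal matrix with $i$-th entry $\sqrt{N/|J_k|}\,\epsilon_{k,i}\,\one_{J_k}(i)$ yields a traceless family whose pairwise disjoint supports automatically give $\tfrac{1}{N}\tr(B_kB_\ell)=\delta_{k\ell}$. I then define
\[
A_t := \sum_{k=1}^{K_N}\sqrt{\lambda_k}\,\psi_k(t)\,B_k,\qquad t\in[0,1].
\]

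With this ansatz the hypotheses of Theorem~\ref{theo:main1} are checked by short calculations. Each $A_t$ is traceless by construction, and using $|J_k|/N\geqslant a_k/2$ (valid for $N$ large, since $|J_k|\geqslant Na_k-2$) together with $\lambda_k\psi_k(t)^2\leqslant Sa_k$, I get
\[
\Vert A_t\Vert \leqslant \max_{k\leqslant K_N}\sqrt{\lambda_k N/|J_k|}\,|\psi_k(t)|\leqslant \sqrt{2S}.
\]
The orthonormality of the $B_k$ gives the exact identity $\langle A_sA_t\rangle=\sum_{k=1}^{K_N}\lambda_k\psi_k(s)\psi_k(t)$, which tends to $K(s,t)$ uniformly on $[0,1]^2$ by Mercer's theorem; the same identity applied to $A_t-A_s$ gives
\[
\langle (A_t-A_s)^2\rangle=\sum_{k=1}^{K_N}\lambda_k(\psi_k(t)-\psi_k(s))^2\leqslant K(t,t)+K(s,s)-2K(s,t)\leqslant L|t-s|^\gamma
\]
directly from \eqref{eq:covlip}. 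Tightness of $\X_{k,\ell}^{\A}(0)$ (which replaces the $A_0=0$ assumption, as noted after Theorem~\ref{theo:main1}) follows from $\langle A_0^2\rangle\to K(0,0)$ combined with the single-time fluctuation result of \cite{benigni2022fluctuations}. For the lower bound $\langle A_t^2\rangle\geqslant N^{-1+\delta}$, note that $\langle A_t^2\rangle\to K(t,t)$, which is strictly positive except on the closed set $Z=\{t\in[0,1]:K(t,t)=0\}$; on $Z$, Cauchy--Schwarz forces $K(\cdot,t)\equiv 0$, so the limiting process vanishes identically there and the conclusion is trivial, or can be enforced by adding an asymptotically negligible perturbation of $A_t$.

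The only genuinely delicate point is the uniform operator-norm bound: it is precisely the hypothesis $\sum_k\lambda_k\Vert\psi_k\Vert_\infty^2<\infty$ that lets me take block sizes $|J_k|\asymp Na_k$ for which $\sqrt{\lambda_k/a_k}=\sqrt S/\Vert\psi_k\Vert_\infty$ exactly absorbs $|\psi_k(t)|\leqslant\Vert\psi_k\Vert_\infty$. Every remaining verification is a direct consequence of the block-diagonal structure and the Karhunen--Lo\`eve representation of $K$, after which Theorem~\ref{theo:main1} applied with $p=1$ yields the desired process-level convergence $\X_{k,\ell}^\A\xrightarrow[N\to\infty]{(d)}\G$.
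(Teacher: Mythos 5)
Your construction is essentially the same as the paper's: a block‑diagonal $A_t$ built from the Mercer decomposition, with block sizes proportional to $\lambda_k\Vert\psi_k\Vert_\infty^2$, paired $\pm$ entries within each block to force tracelessness, and a truncation level that grows slowly with $N$ so the operator norm stays bounded by $\sqrt{2S}$. Your normalization of the $B_k$ is marginally cleaner (it gives $\langle A_sA_t\rangle=\sum_{k\le K_N}\lambda_k\psi_k(s)\psi_k(t)$ exactly, avoiding the rounding‑error term the paper must control), and you address the $t=0$ tightness and the degenerate case $K(t,t)=0$ more explicitly; the only minor slip is that the inequality $|J_k|/N\geqslant a_k/2$ requires $\lfloor Na_k/2\rfloor\geqslant 2$, not merely $\geqslant 1$, a trivially repairable condition on $K_N$.
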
  
We note that the condition on the Mercer decomposition is quite flexible as most usual Gaussian processes, such as Brownian motion, Brownian bridge, Ornstein--Uhlenbeck processes, fractional Brownian motion follow it. The proof is based on the construction of the matrix process $\A$ using the Karhunen--Lo\`eve expansion of the process $\G$. While this construction is not necessarily explicit, we give examples of explicit construction for usual Gaussian processes in Section \ref{sec:examples}.
\section{Proof of mains results}
We start by giving the main result of \cite{benigni2022fluctuations} which gives the corresponding multidimensional convergence of $\X$. If one first defines 
\[
X_{k,\ell}^A = \sqrt{\frac{N}{1+\delta_{k\ell}}}\langle \u_k,A\u_\ell\rangle.
\]
\begin{prop}[\cite{benigni2022fluctuations}*{Theorem 2.2}]\label{theo:benignicipolloni}
Let $p\in\N$, $\alpha\in(0,1)$ and a small $\delta>0.$ For any real symmetric deterministic traceless matrices $A_1,\dots,A_p$ such that $\Vert A_i\Vert \leqslant C$ and $\langle A_i^2\rangle \geqslant N^{-1+\delta}$, and any $k_1,\dots,k_{p},\ell_1,\dots,\ell_p\in\unn{\alpha N}{(1-\alpha)N}$, the family 
\[
\left(
    \sqrt{
        \frac{1}{\langle A_1^2\rangle}
    }
    X_{k_1,\ell_1}^{A_1},
    \dots,
    \sqrt{
        \frac{1}{\langle A_p^2\rangle}
    }
    X_{k_p,\ell_p}^{A_p}
\right)
\]
is approximated in the sense of moments by a centered Gaussian field $(G_{k_1,\ell_1}^{A_1},\dots,G_{k_p,\ell_p}^{A_p})$ with covariance given by 
\begin{equation}\label{eq:moment}
\E\left[
    G_{i,j}^A G_{k,\ell}^B
\right]
=
\frac{\left(
    \delta_{ki}\delta_{\ell j}+\delta_{kj}\delta_{\ell i}
\right)}
{
\sqrt{(1+\delta_{ij})(1+\delta_{k\ell})}
}
\frac{
    \langle AB\rangle 
}{
    \sqrt{\langle A^2\rangle\langle B^2\rangle}.
}
\end{equation}
\end{prop}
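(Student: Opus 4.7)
The plan is a two-step strategy: first prove the moment approximation for a short-time Gaussian-divisible matrix via an extended eigenvector moment flow, then transfer to arbitrary generalized Wigner matrices by a Green's function comparison argument. By the method of moments and Wick's theorem, it suffices to establish, for every tuple $(m_i)_{i=1}^p$,
\[
\E\!\left[\prod_{i=1}^p\!\left(\frac{X_{k_i,\ell_i}^{A_i}}{\langle A_i^2\rangle^{1/2}}\right)^{m_i}\right]
\;\longrightarrow\;
\sum_{\pi}\prod_{\{a,b\}\in\pi}\E\!\left[G^{A_a}_{k_a,\ell_a}G^{A_b}_{k_b,\ell_b}\right],
\]
with the sum over pair partitions $\pi$ of the $\sum m_i$ factors and pairwise Gaussian covariances prescribed by \eqref{eq:moment}.

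For the short-time Gaussian-divisible analysis I would run the Ornstein--Uhlenbeck matrix flow $\mathrm{d}W_t = -\tfrac12 W_t\,\mathrm{d}t + \mathrm{d}H_t$, with $H_t$ a matrix Brownian motion matching the variance profile of $W_0$, for a time $\tau = N^{-1+\sigma}$ with $\sigma\in(0,\delta)$ small. The eigenvectors then evolve through the Dyson vector flow, from which one derives a coupled SDE for the overlaps $\langle\u_k,A\u_\ell\rangle$. Following Bourgade--Yau's eigenvector moment flow and its Marcinek--Yau extension, I would introduce an observable indexed by perfect matchings of the overlap indices $(k_i,\ell_i)$ together with the associated observables $A_i$. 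The key analytic steps are (a) closedness of this enlarged system under the generator, (b) identification of the Gaussian equilibrium with covariance \eqref{eq:moment}, and (c) quantitative relaxation on the time $\tau$ via parabolic estimates (maximum principle and finite-speed-of-propagation on the eigenvalue-index lattice), using eigenvalue rigidity and optimal delocalization as inputs. The assumption $\langle A_i^2\rangle\geqslant N^{-1+\delta}$ ensures the normalized observables remain of order one throughout this analysis.

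To remove the Gaussian-divisibility assumption, I would apply a Green's function comparison argument. The moments of $X_{k,\ell}^A$ can be represented via contour integrals of products of resolvent quantities of the form $\langle\q_1,(W-z_1)^{-1}A(W-z_2)^{-1}\q_2\rangle$ around microscopic contours enclosing the relevant eigenvalues. A Lindeberg-type entry swap interpolating between $W_0$ and the Gaussian-divisible $W_\tau$, controlled uniformly by the local law and by the isotropic law with $A$-weighted vectors, yields an $o(1)$ comparison once the first two moments of the entries match, which is exactly the generalized Wigner assumption.

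The main obstacle is the construction and quantitative analysis of the extended eigenvector moment flow: the Bourgade--Yau framework handles quadratic functionals $\langle\q,\u_k\rangle^{2p}$, while here one needs bilinear overlaps $\langle\u_k,A_i\u_\ell\rangle$ involving several distinct deterministic observables and several index pairs $(k_i,\ell_i)$ that may couple through the flow. The delta-matching structure of \eqref{eq:moment} --- matching both in the index pair $\{k,\ell\}$ and in the observable with prefactor $\langle A_aA_b\rangle/\sqrt{\langle A_a^2\rangle\langle A_b^2\rangle}$ --- should emerge at equilibrium from a completeness relation $\sum_m\u_m\u_m^\trans=\Id$ collapsing intermediate summations; isolating this algebraic structure and turning it into quantitative relaxation uniformly in $p$ is the principal analytic challenge.
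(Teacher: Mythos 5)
You should first note that the paper does not prove Proposition~\ref{theo:benignicipolloni} at all: it is imported verbatim from \cite{benigni2022fluctuations}*{Theorem 2.2} and used as a black box (the only internal commentary is the Remark explaining that the extension from Wigner to generalized Wigner matrices requires the rank-optimal multi-resolvent local laws and ETH of \cite{adhikari2024eigenstate} and \cite{erdos2024eigenstate}). So there is no internal proof to compare against; your text has to be judged as a reconstruction of the external proof. At the level of strategy, your roadmap --- method of moments, analysis of a Gaussian-divisible ensemble at time $\tau=N^{-1+\sigma}$ via an extended (perfect-matching, multi-observable) eigenvector moment flow in the spirit of Bourgade--Yau and Marcinek--Yau, followed by a Green's function comparison --- is indeed the kind of argument used in that literature, and you correctly locate where the covariance structure \eqref{eq:moment} and the normalization $\langle A_i^2\rangle\geqslant N^{-1+\delta}$ enter.

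As a proof, however, it is a roadmap with the hard content left open. Your items (a)--(c) (closedness of the enlarged observable system under the generator, identification of the Gaussian equilibrium, quantitative relaxation on time $\tau$) are exactly the substance of the cited theorem, and they cannot close without the a priori inputs you never bring in: rank-uniform ETH and multi-resolvent local laws, which are what make the maximum-principle/finite-speed estimates and the equilibrium identification quantitative. The comparison step is also too coarse as stated: the overlaps $\sqrt{N}\langle \u_k,A\u_\ell\rangle$ fluctuate at scale one only after the $\sqrt{N}$ rescaling, so an entrywise Lindeberg swap justified by ``the first two moments match'' does not by itself give an $o(1)$ comparison of moments at the fluctuation scale; one needs either higher-order moment matching or a continuity-along-the-flow estimate whose error terms are again controlled by isotropic/multi-resolvent local laws, and the proposed contour-integral representation ``around microscopic contours enclosing the relevant eigenvalues'' additionally requires rigidity and an overlap-to-resolvent approximation that you do not supply. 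Finally, in the context of this paper the proposition is invoked for \emph{generalized} Wigner matrices, and your sketch never engages with the variance profile: the OU flow must preserve the profile, and the relevant local laws/ETH in this setting are precisely those of \cite{adhikari2024eigenstate} and \cite{erdos2024eigenstate}, which is the one point the present paper is careful to flag.
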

\begin{rmk}
We note that \cite{benigni2022fluctuations}*{Theorem 2.2} has been written in the context of Wigner matrices and not generalized Wigner matrices. However, the only input needed for the proof to work for generalized Wigner matrices is the rank-optimal multi-resolvent local laws and ETH which have been later developed for this more general class of matrices in \cites{adhikari2024eigenstate,erdos2024eigenstate}. \cite{erdos2024eigenstate} considers an even more general class of matrices but the equivalent of the theorem above would be different for these matrices as it would depend on the variance profile of the Wigner-type matrix.
\end{rmk}
We are now ready to prove Theorem \ref{theo:main1}.
\begin{proof}[Proof of Theorem \ref{theo:main1}] To prove convergence of processes we are using the multidimensional convergence and tightness properties of the family. To do so we start by consider for $i\in\unn{1}{p}$, an integer $j_i\in \N$ and a family of time $t^{(i)}_{1},\dots,t^{(i)}_{j_i}\in[0,1]$ and we thus want to consider convergence in distribution of the vector 
    \[
    \left(
        X^{\A_1}_{k_1,\ell_1}\left(
            t^{(1)}_{1}
        \right),
        \dots,
        X^{\A_1}_{k_1,\ell_1}\left(
            t^{(1)}_{j_1}
        \right),
        X^{\A_2}_{k_2,\ell_2}\left(
            t^{(2)}_{1}
        \right),
        \dots,
        X^{\A_p}_{k_p,\ell_p}\left(
            t^{(p)}_{j_p}
        \right)
    \right) .
    \]
Since we know that for all $i\in\unn{1}{p}$ and $j\in\unn{1}{j_i}$ we have the convergence 
\[\langle A_{i,t^{(i)}_{j}}^2\rangle \xrightarrow[N\to\infty]{} K_i\left(t^{(i)}_{j},t^{(i)}_{j}\right)
\] we know that for $N$ large enough there exist $c,C>0$ such that $c\leqslant \langle A_{i,t^{(i)}_{j}}^2\rangle\leqslant C.$ In particular, the renormalization by $\sqrt{\langle A_i^2\rangle}$ in Theorem \ref{theo:benignicipolloni} do not change any moment convergence and we do obtain that the moment are approximated by the Gaussian field with covariance from \eqref{eq:moment} multiplied by $\sqrt{\langle A^2\rangle\langle B^2\rangle}$. We thus obtain the multidimensional projection of independent centered Gaussian processes starting at 0 and for a fixed $i\in\unn{1}{p}$, the covariance obtained for $\X^{\A_i}_{k_i,\ell_i}$ is 
\[
\langle A_{i,t}A_{i,s}\rangle \xrightarrow[N\to\infty]{} K_i(s,t).
\] 
Since the Gaussian distribution is determined by its moments we obtain convergence in distribution from this moment convergence and we obtain the first part of the proof.

We now prove tightness of the family of processes. We note that we consider a finite family of processes and can thus prove tightness of one process $\X^{\A_i,N}_{k_i,\ell_i}$ for $i\in\unn{1}{p}$. For this reason, we omit the subscript $i$ in the rest of proof. We use the following tightness criterion from \cite{billingsley}, $\{\X^{\A,N}_{k,\ell}\}_N$ is tight if $\{X^{\A,N}_{k,\ell}(0)\}$ is tight and if for every $\varepsilon,\eta>0$ there exists $\delta\in(0,1)$ and $N_1\in\N$ such that for all $N\geqslant N_1$ and $t\in[0,1]$ we have 
\[
\frac{1}{\delta}\P\left(
    \sup_{t\leqslant s\leqslant (t+\delta)\wedge 1}
    \left\vert   
        X^{\A,N}_{k,\ell}(s)-X^{\A,N}_{k,\ell}(t)
    \right\vert>\varepsilon
\right)\leqslant \eta.
\]
Since we suppose that $A_0=0$, the first part of the criterion is immediate. For the second step, fix arbitrary $\varepsilon,\eta>0$, a point $t\in[0,1]$, and choose $\delta\in(0,1)$ (to be specified later).  
Let $j\in\mathbb{N}$ and select a parameter $\beta>0$, whose value will also be chosen later. We consider the dyadic mesh at scale $j$ and a scale parameter,
\[
\mathcal{T}_{j}
=
\left\{
s_n^{(j)}\coloneqq t+\frac{n\delta}{2^j},n\in\unn{0}{2^j}
\right\}
\quad \text{and}\quad 
\varepsilon_j = 2^\beta(1-2^{-\beta}){2^{-\beta j}}\varepsilon.
\]
We also define the family of events for $j\in \N$ and $n\in\unn{0}{2^j}$,
\[
E^{(j)}_n = \left\{\left\vert   
        X^{\A,N}_{k,\ell}\left(s^{(j)}_{n+1}\right)
        -
        X^{\A,N}_{k,\ell}\left(s^{(j)}_{n}\right)
    \right\vert
    >\varepsilon_j  
\right\},
\]For $n\in\unn{0}{2^j},$ and $q\in \N$ chosen later, we have 
\[
\P\left(
    E^{(j)}_n
\right)
\leqslant \frac{
    \E\left[
        \left(
        X^{\A,N}_{k,\ell}\left(s^{(j)}_{n+1}\right)
        -
        X^{\A,N}_{k,\ell}\left(s^{(j)}_{n}\right)
    \right)^{2q}
    \right]
}
{\varepsilon_j^{2q}}.
\]
But we have 
\[
X^{\A,N}_{k,\ell}\left(s^{(j)}_{n+1}\right)
        -
X^{\A,N}_{k,\ell}\left(s^{(j)}_{n}\right)
=
\sqrt{\frac{N}{1+\delta_{k\ell}}}\left\langle \u_k,
\left(
    A_{s^{(j)}_{n+1}}-A_{s^{(j)}_{n}}
\right)\u_\ell\right\rangle.
\]
Since $\Delta^{(j)}_n\coloneqq A_{s^{(j)}_{n+1}}-A_{s^{(j)}_{n}}$ follows the assumptions of Theorem \ref{theo:benignicipolloni} and of Theorem \ref{theo:main1}, we know that if $C>1$ there exists $N_1\in \N$ such that for all $N\geqslant N_1$ we have 
\[
\P\left(
    E^{(j)}_n
\right)
\leqslant    
C\frac{
    \left\langle 
        {\Delta^{(j)}_n}^2
    \right\rangle^q
}
{\varepsilon_j^{2q}}
\leqslant    
\frac{C_qL^q\delta^{\gamma q}}{2^{2q\beta}(1-2^{-\beta})^{2q}\varepsilon^{2q}}2^{jq\left(2\beta -\gamma \right)}.
\]
Now, at scale $j$, we see that there exists $2^j+1$ points in the mesh $\mathcal{T}_j$ in $[t,t+\delta]$, so that by a union bound, we get that for $N\geqslant N_1$,
\[
\P\Bigg(
   \bigcup_{s^{(j)}_n\in[t,t+\delta]} E^{(j)}_n
\Bigg)
\leqslant 
\frac{C_qL^q\delta^{\gamma q}}{2^{2q\beta}(1-2^{-\beta})^{2q}\varepsilon^{2q}}2^{jq\left(2\beta -\gamma +\frac{1}{q}\right)}.
\]
We now choose $\beta$ small enough and $q$ large enough so that $2\beta-\gamma+\frac{1}{q}<0$, for instance $q \geqslant \lceil \frac{2}{\gamma}\rceil$ and $\beta \leqslant \frac{\gamma}{5}.$ By another union bound, we get that 
\begin{align*}
\P\Bigg(
    \bigcup_{j\in \N} 
    \bigcup_{s^{(j)}_n\in[t,t+\delta]}
    E^{(j)}_n
\Bigg)
\leqslant \frac{C_qL^q\delta^{\gamma q}}{2^{2q\beta}(1-2^{-\beta})^{2q}\varepsilon^{2q}}\sum_{j\geqslant 0}2^{jq\left(2\beta -\gamma +\frac{1}{q}\right)}
=
\frac{C_{\gamma,L}}{\varepsilon^{2q}}\delta^{\gamma q}.
\end{align*}

We now work on the complementary of this event, namely on the event 
\[
F=\bigcap_{j\in \N}\bigcap_{s_n^{(j)}\in[t,t+\delta]}
    \left\{\left\vert   
        X^{\A,N}_{k,\ell}\left(s^{(j)}_{n+1}\right)
        -
        X^{\A,N}_{k,\ell}\left(s^{(j)}_{n}\right)
    \right\vert \leqslant \varepsilon_j
\right\}.
\]
Let $s\in[t,t+\delta]$, and define the sequence of times 
\[
u_j = t+\left\lfloor 2^j\frac{s-t}{\delta}\right\rfloor\frac{\delta}{2^j}\in \mathcal{T}_j
\quad\text{so that }\quad 
u_j\xrightarrow[j\to\infty]{}s\quad\text{and}\quad u_0=t.
\]
We also note that 
\[
\left\lfloor 2^{j+1}\frac{s-t}{\delta}\right\rfloor \in \left\{
    2\left\lfloor 2^j\frac{s-t}{\delta}\right\rfloor,2\left\lfloor 2^j\frac{s-t}{\delta}\right\rfloor+1 
\right\}
\quad\text{which gives}\quad 
u_{j+1}\in\left\{u_j,u_j+\frac{\delta}{2^{j+1}}\right\}.
\]
Thus we can write, since we work on the event $F$ and that the dyadic meshes are nested with $u_{j+1}$ and $u_j$ being adjacent (or equal) in $\mathcal{T}_{j+1}$,
\[
\left\vert
X_{k,\ell}^{\A,N}(s)-X_{k,\ell}^{\A,N}(t)
\right\vert   
\leqslant
\sum_{j\geqslant 0}\left\vert    
    X_{k,\ell}^{\A,N}(u_{j+1})-X_{k,\ell}^{\A,N}(u_j)
\right\vert
\leqslant    
\sum_{j\geqslant 0} \varepsilon_{j+1} =\varepsilon
\]
uniformly in $s\in[t,t+\delta]$.

We finally get 
\[
\frac{1}{\delta}\P\left(
    \sup_{t\leqslant s\leqslant (t+\delta)\wedge 1}
    \left\vert   
        X^{\A,N}_{k,\ell}(s)-X^{\A,N}_{k,\ell}(t)
    \right\vert>\varepsilon
\right)\leqslant 
\frac{C_{\gamma,L}}{\varepsilon^{2q}}\delta^{\gamma q-1}.
\]
We then finally need to choose $\delta\in(0,1)$ so that 
\[
\frac{C_{\gamma,L}}{\varepsilon^{2q}}\delta^{\gamma q-1}\leqslant \eta   
\Longleftrightarrow
\delta \leqslant \left(\frac{\eta \varepsilon^{2q}}{C_{\gamma,L}}\right)^{\frac{1}{\gamma q -1}}
\]
since we chose $q\in \N$ such that $q\geqslant \lceil \frac{2}{\gamma}\rceil > \frac{1}{\gamma}$. We thus finally obtain tightness of the sequence of processes and thus convergence in distribution.
\end{proof}
We now give the proof of Proposition \ref{prop:main2}.
\begin{proof}[Proof of Proposition \ref{prop:main2}]
    Since we have the Mercer decomposition $K(s,t) = \sum_{k=1}^{+\infty}\lambda_k\psi_k(s)\psi_k(t)$, a natural candidate would be to consider the diagonal matrix $A_t$ whose diagonal entries $A_{ii,t}$ are given by 
    \[
    A_{ii,t}=\sqrt{N\lambda_i}\psi_i(t).
    \]
    Indeed, we obtain that the covariance function of $\X^{\A}_{k,\ell}$ is given by 
    \[
    \langle A_tA_s\rangle = \frac{1}{N}\sum_{i=1}^N N\lambda_i\psi_i(t)\psi_i(s) \xrightarrow[N\to\infty]{}K(s,t).
    \]
    However, we have several issues from this construction : the matrices are not traceless and they are not of bounded spectral norm which is needed to use the convergence of processes from Theorem \ref{theo:main1}. We now adjust our construction, while keeping the matrix $A$ diagonal, so that our model follows these two constraints. Let $\kappa\in(0,1)$ and define 
    \[
    T_\kappa = \sum_{k=1}^{N^\kappa}\lambda_k\Vert \psi_k\Vert_\infty^2 \leqslant \sum_{k=1}^{+\infty}\lambda_k\Vert \psi_k\Vert_\infty^2<\infty.
    \]
    Consider 
    \[
    \omega_i^{(\kappa)} = \frac{\lambda_i\Vert \psi_i\Vert^2_\infty}{T_\kappa},\quad\text{in particular }\sum_{i=1}^{N^\kappa}\omega_i^{(\kappa)}=1.
    \]
    Define $n_i^{(\kappa)} = \left\lfloor\frac{N}{2}\omega_i\right\rfloor,$ so that up to rounding errors we have $\sum_{i=1}^{N^{\kappa}}2n_i^{(\kappa)}=N,$ and consider the diagonal matrix $A$ such that for $i\in\unn{1}{N^\kappa}$, with the convention that $n_0^{(\kappa)}=0$,
    \begin{align*}
    A_{j,t} = \frac{\sqrt{T_\kappa}}{\Vert \psi_i\Vert_\infty}\psi_i(t) 
    \quad \text{si }
    j\in\unn{\sum_{\ell=0}^{i-1}2n_{\ell-1}^{(\kappa)}+1}{\sum_{\ell=0}^{i-1}2n_{\ell-1}^{(\kappa)}+n_i^{(\kappa)}}
    \text{ for some }\ell\in\unn{1}{N^\kappa}\\
    A_{j,t} = -\frac{\sqrt{T_\kappa}}{\Vert \psi_i\Vert_\infty}\psi_i(t)
    \quad \text{si }
    j\in\unn{\sum_{\ell=0}^{i-1}2n_{\ell-1}^{(\kappa)}+n_i^{\kappa}+1}{\sum_{\ell=0}^{i}2n_{\ell-1}^{(\kappa)}}
    \text{ for some }\ell\in\unn{1}{N^\kappa}
    \end{align*}
    By construction, we see that $A$ is symmetric since diagonal. If $N$ is even, the matrix is exactly traceless since every term appears twice with a different sign. If $N$ is odd, then one can slighty modify the matrix for it be completely traceless, by for instance leaving the last diagonal entry to be zero. We know need to check the covariance function given by these matrices and we have 
    \[
    \langle A_tA_s\rangle 
    =
    \frac{1}{N}\sum_{i=1}^N A_{i,t}A_{i,s}
    =
    \frac{1}{N}\sum_{i=1}^{N^\kappa}2n^{(\kappa)}_i \frac{T_\kappa}{\Vert \psi_j\Vert_\infty^2}\psi_i(t)\psi_i(s)
    =
    \frac{1}{N}\sum_{i=1}^{N^\kappa}\left(
        \frac{N\lambda_i\Vert\psi_i\Vert_\infty^2}{T_\kappa}+\xi_i
    \right)
    \frac{T_\kappa}{\Vert \psi_i\Vert_\infty^2}\psi_i(t)\psi_i(t)
    \]
    with $\xi_i\in [0,2)$ corresponding to the rounding error in the definition of $n_i^{(\kappa)}$. Thus we get that 
    \[
    \langle A_tA_s\rangle 
    =
    \sum_{i=1}^{N^\kappa}\lambda_i\psi_i(t)\psi_i(s) + \frac{1}{N}\sum_{i=1}^{N^\kappa}\xi_i\frac{T_\kappa}{\Vert \psi_i\Vert_\infty^2}\psi_i(t)\psi_i(s).
    \]
    Since $\xi_i\in(0,2), \kappa\in(0,1)$ and $T_\kappa \leqslant \sum_{i=1}^\infty \lambda_i\Vert \psi_i\Vert^2<\infty,$ we obtain that 
    \[
    \langle A_tA_s\rangle \xrightarrow[N\to\infty]{} \sum_{i=1}^\infty \lambda_i\psi_i(t)\psi_i(s) = K(s,t).
    \]
    Besides, since 
    \[
    \langle (A_t-A_s)^2\rangle = \langle A_t^2\rangle + \langle A_s^2\rangle -2\langle A_tA_s\rangle
    \]
    and the assumption \eqref{eq:covlip} on the covariance function $K$,  we have that for $N$ large enough,
    \[
    \langle (A_t-A_s)^2\rangle \leqslant L\vert t-s\vert^\gamma.
    \]
    Finally, we need to see that $A_t$ has bounded spectral norm. But we see easily that for every $j\in\unn{1}{N}$, there exists $i\in\unn{1}{N^\kappa}$ such that 
    \[
    \vert A_{j,t}\vert = \frac{\sqrt{T_\kappa}}{\Vert \psi_i\Vert_\infty}\vert \psi_i\vert \leqslant \sqrt{T_\kappa}\leqslant \sqrt{\sum_{k=1}^\infty \lambda_k\Vert \psi_k\Vert_\infty^2}<\infty
    \]
    by assumption. Thus, we obtain the correct finite-dimensional distribution and tightness from the proof of Theorem \ref{theo:main1} and the corresponding convergence.
\end{proof}
\section{Examples of limiting processes}\label{sec:examples}
In this section, we present examples of families of matrices $\A$ and their corresponding limiting Gaussian processes. The proof of Proposition \ref{prop:main2} in fact provides an explicit (though dependent on the Karhunen--Loève decomposition) procedure for constructing $\A$ from a given process $\G$. Before turning to that construction, however, we first examine several more natural families of matrices $\A$ that lead to specific limiting processes.
\paragraph{Brownian bridge.} The first convergence is the dual convergence of \eqref{eq:dualconvergence} illustrated in Figure \ref{fig:BB}. Given an orthonormal basis of unit $(\q_\alpha)_{\alpha=1}^N$, we have the convergence, for $k,\ell\in\unn{\alpha N}{(1-\alpha)N},$
\begin{equation}\label{eq:brownianbridge}
\left(
    \sqrt{\frac{N}{(1+\delta_{k\ell})}}
    \sum_{\alpha=1}^{\lfloor Nt\rfloor}
    \left(
        \langle \q_\alpha,\u_k\rangle\langle \q_\alpha,\u_\ell\rangle - \frac{\delta_{k\ell}}{N}
    \right)
\right)_{t\in[0,1]}
\xrightarrow[N\to\infty]{(d)}\mathrm{BB}.
\end{equation}
We can construct the family of matrices,
\[
A_{ab,t} = \sum_{\alpha=1}^{\lfloor Nt\rfloor}
\left(
    q_\alpha(a)q_\alpha(b) - \frac{\delta_{ab}}{N}
\right)
\quad\text{then}\quad 
\langle \u_k,A_t\u_\ell\rangle = \sum_{\alpha=1}^{\lfloor Nt\rfloor}
\left(
    \langle \q_\alpha,\u_k\rangle\langle \q_\alpha,\u_\ell\rangle-\frac{\delta_{k\ell}}{N}
\right).
\]
This corresponds to a family of traceless symmetric matrices with bounded spectral norm. We can compute the autorrelation function for $s\leqslant t$,
\begin{equation}\label{eq:famvect}
    \begin{aligned}
\langle A_tA_s\rangle 
&=
\frac{1}{N}\sum_{a,b=1}^N
\sum_{\alpha=1}^{\lfloor Nt\rfloor}
\sum_{\beta=1}^{\lfloor Ns \rfloor}
\left(
    q_\alpha(a)q_\alpha(b)-\frac{\delta_{ab}}{N}
\right)
\left(
    q_\beta(a)q_\beta(b)-\frac{\delta_{ab}}{N}
\right)\\
& =
\frac{1}{N}\sum_{\alpha=1}^{\lfloor Ns\rfloor}
\Vert \q_\alpha\Vert^2
+
\frac{1}{N}\sum_{\alpha\neq \beta}\langle \q_\alpha,\q_\beta\rangle^2
-
\frac{\lfloor Nt\rfloor}{N^2}\sum_{\beta=1}^{\lfloor Ns\rfloor}\Vert \q_\beta\Vert^2 
-
\frac{\lfloor Ns\rfloor}{N^2}\sum_{\alpha=1}^{\lfloor Nt\rfloor}\Vert\q_\alpha\Vert^2
+
\frac{\lfloor Nt\rfloor\lfloor Ns\rfloor}{N^2}\\
&=
\frac{\lfloor Ns\rfloor}{N}-\frac{\lfloor Nt\rfloor\lfloor Ns\rfloor}{N^2}
\xrightarrow[N\to\infty]{} s(1-t)=K_{\mathrm{BB}}(s,t).
\end{aligned}
\end{equation}
We note that while the construction \eqref{eq:brownianbridge} is natural as it corresponds to similar \emph{global} eigenvector processes studied in \cites{silverstein1981describing,donati2012truncations,benaych2012universality,bao2014universality}, we can obtain Brownian bridge as the limiting process by constructing $\A$ as in the proof of Proposition \ref{prop:main2} since the Karhunen--Lo\`eve decomposition of the Brownian bridge is explicit, namely 
\[
\lambda_k^{\mathrm{BB}} = \frac{1}{(k\pi)^2},\quad \psi_k^{\mathrm{BB}}(t) = \sqrt{2}\sin(k\pi t).
\]
\vspace{-5ex}
\begin{figure}[!ht]
        \centering
        \includegraphics[height=.225\linewidth]{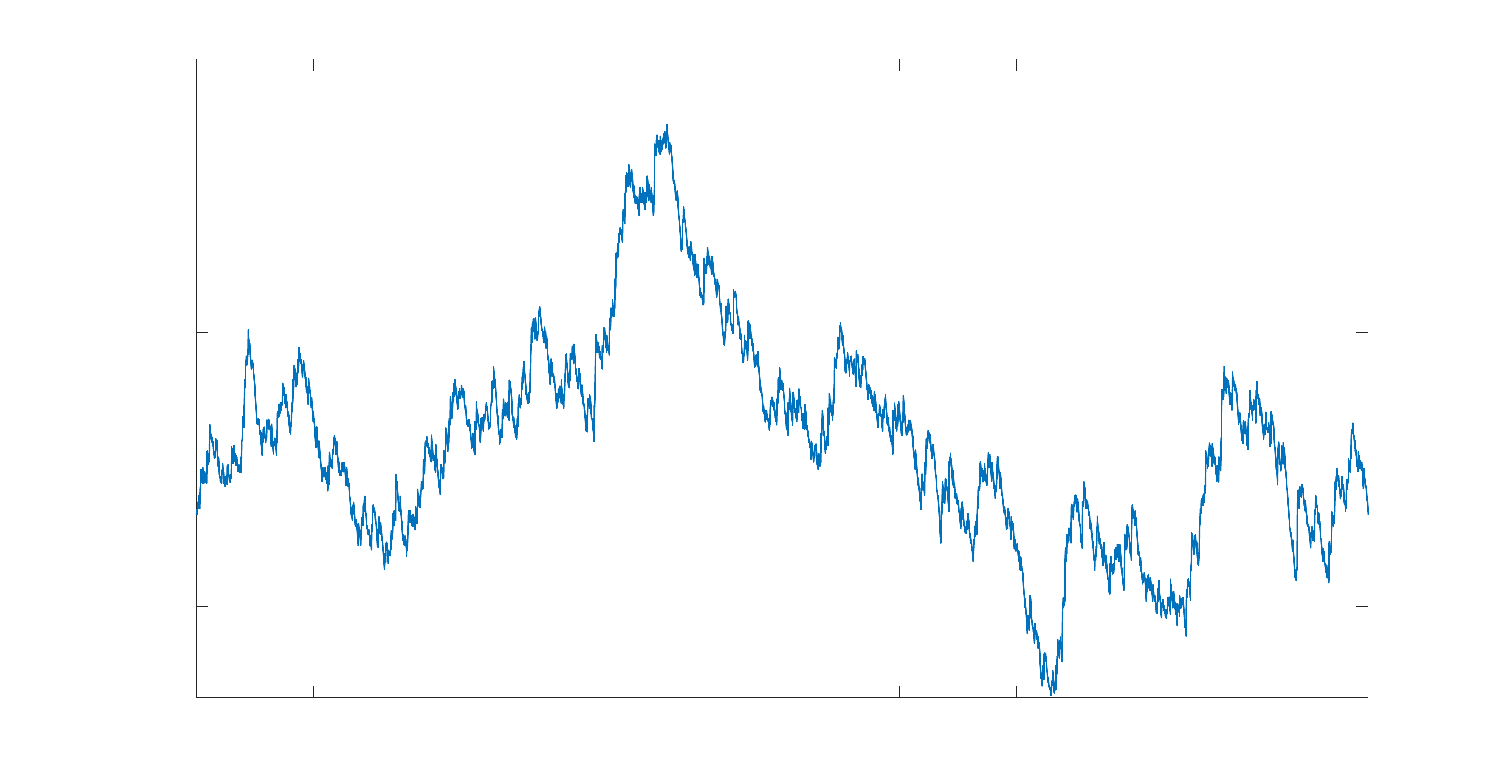}
        \includegraphics[width=.3\linewidth]{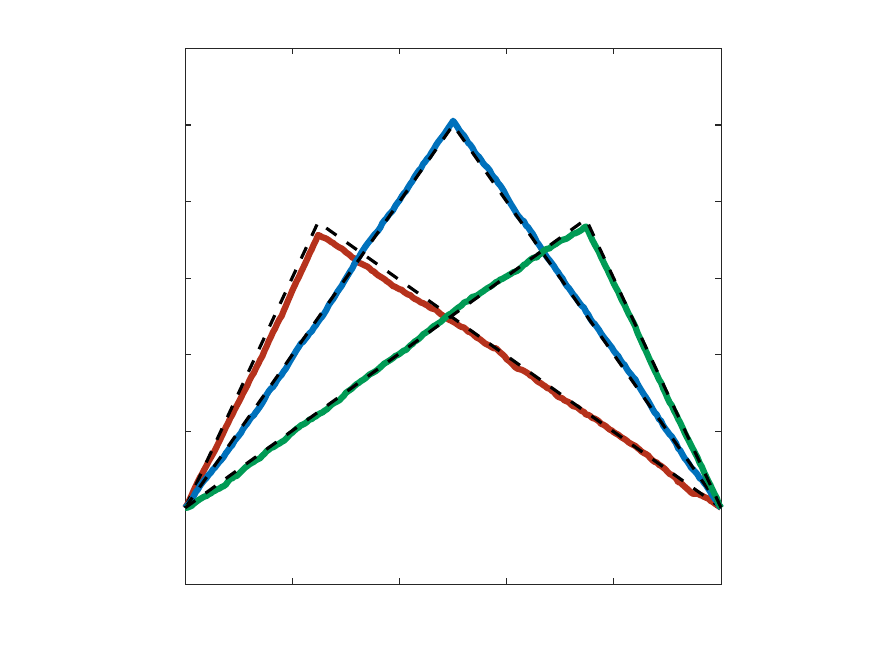}
    \caption{Sample of $\X_{k,k}^\A$ as in \eqref{eq:brownianbridge} for $\u_k$ the middle eigenvector of a Rademacher random matrix on the left. Empirical covariance functions $K(s,t)$ at times $s=\frac{1}{4},\frac{1}{2},$ and $\frac{3}{4}$ on the right.}\label{fig:BB}
\end{figure}
\paragraph{Brownian motion.} While the Karhunen--Lo\`eve decomposition of the Brownian motion is also explicit since 
\[
\lambda_k^{\mathrm{BM}} = \frac{1}{\left(k-\frac{1}{2}\right)^2\pi^2},\quad \psi_K^{\mathrm{BM}}(t) = \sqrt{2}\sin\left(\left(k-\frac{1}{2}\right)\pi t\right),
\]  we can consider the same construction as above but for a different family of vector $(\q_\alpha)$. For instance, if we consider an equiangular family of unit vectors such that for $\gamma\in\Rbb$, 
\[
\langle \q_\alpha,\q_\beta\rangle = \delta_{\alpha\beta}+(1-\delta_{\alpha\beta})\frac{\gamma}{\sqrt{N}}.
\]
Then the computation from \eqref{eq:famvect} gives us that for $s\leqslant t$,
\begin{equation}\label{eq:bm}
\langle A_tA_s\rangle = \frac{\lfloor Ns\rfloor}{N}+\frac{\lfloor Nt\rfloor\lfloor Ns\rfloor-\lfloor Ns\rfloor}{N^2}\gamma^2-\frac{\lfloor Nt\rfloor\lfloor Ns\rfloor}{N^2}\xrightarrow[N\to\infty]{}
s(1+(\gamma^2-1)t).
\end{equation}
In particular, we find that if $\gamma=1$ then the limiting covariance function $\langle A_tA_s\rangle = s\wedge t$ corresponds to Brownian motion as illustrated in Figure \ref{fig:BM}. In general, we note that this corresponds to the Gaussian process given by, if $\mathbf{B}$ is a standard Brownian motion,
\[
G_t = B_t - (1+\gamma)tB_1
\]
recovering the Brownian bridge for $\gamma=0$ and Brownian motion for $\vert \gamma\vert=1$.
\begin{figure}[!ht]
        \centering
        \includegraphics[height=.225\linewidth]{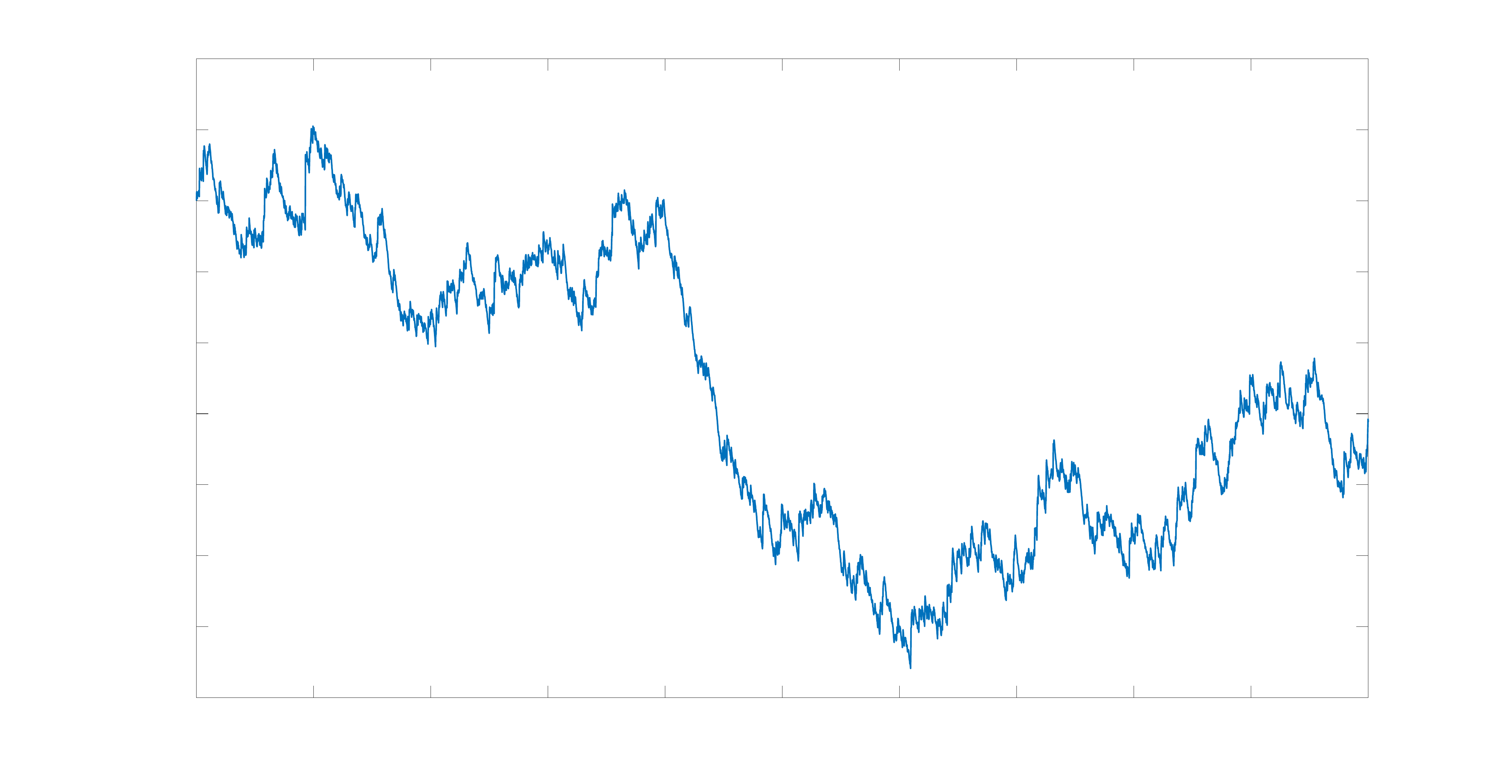}
        \includegraphics[width=.3\linewidth]{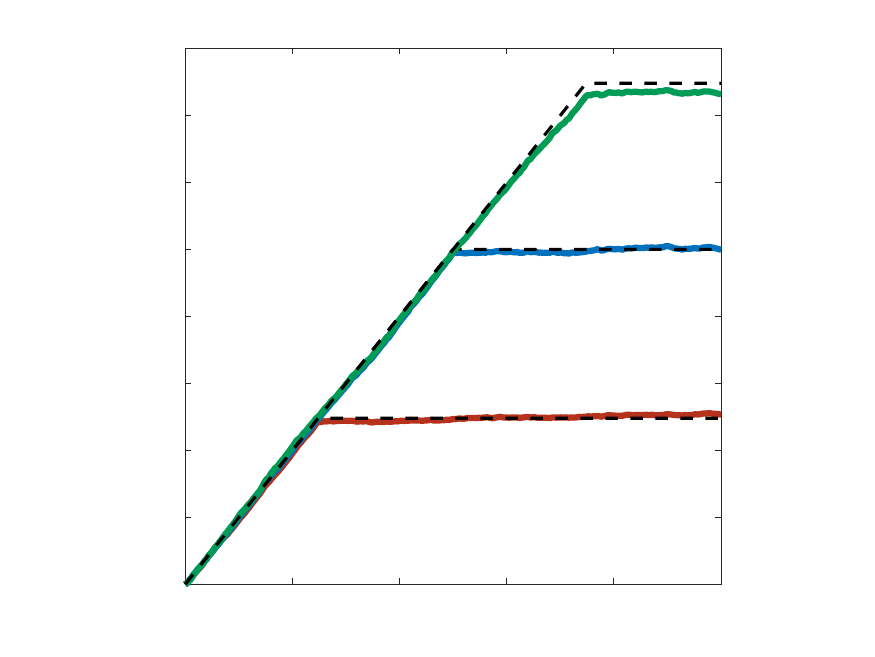}
    \caption{Sample of $\X_{k,k}^\A$ as in \eqref{eq:brownianbridge} with an equiangular family for $\u_k$ the middle eigenvector of a Rademacher random matrix on the left. Empirical covariance functions $K(s,t)$ at times $s=\frac{1}{4},\frac{1}{2},$ and $\frac{3}{4}$ on the right.}\label{fig:BM}
\end{figure}

Note that we can generalize this equiangular variant of the process by considering general unit vectors. For instance, consider a function $F:[0,1]^2\to[-1,+\infty]$, then if the matrix defined by 
\begin{equation}\label{eq:condgamma}
\Gamma_{ij} =\delta_{ij} +\frac{1-\delta_{ij}}{\sqrt{N}}
\sqrt{1+F\left(
    \frac{i}{N},\frac{j}{N}
\right)}
\end{equation}
is positive semi-definite then, using for instance the Cholesky decomposition of $\Gamma$, one can find a family of vectors $(\q_\alpha)_{\alpha=1}^N$ such that 
\[
\langle \q_\alpha,\q_\beta\rangle^2 = \delta_{\alpha\beta}+\frac{1-\delta_{\alpha\beta}}{\sqrt{N}}\left(
    1+F\left(\frac{i}{N},\frac{j}{N}\right)
\right)
\]
and thus from \eqref{eq:famvect} we get the covariance function for $s\leqslant t$,
\begin{align*}
\langle A_tA_s\rangle 
&=
\frac{\lfloor Ns\rfloor}{N}+\frac{\lfloor Ns\rfloor(\lfloor Nt\rfloor-1)}{N^2} + \frac{1}{N^2}\sum_{\alpha=1}^{\lfloor Nt\rfloor}\sum_{\beta=1}^{\lfloor Ns\rfloor}F\left(\frac{\alpha}{N},\frac{\beta}{N}\right)-\frac{\lfloor Ns\rfloor(\lfloor Nt\rfloor-1)}{N^2}\\
&\xrightarrow[N\to\infty]{}
s+\int_0^s\int_0^t F(x,y)\d x\d y\eqqcolon s+K(s,t).
\end{align*}
Thus we see that if $\mathbf{B}$ is a standard Brownian motion and if $\G$ is a centered Gaussian process with covariance function $K(s,t)$ independent of $\mathbf{B}$ then the limiting process is given by $\mathbf{B}+\G$.

The condition \eqref{eq:condgamma} can be tricky and to our knowledge should be checked for each function $F$. As an illustration, we give an example in Figure \ref{fig:BMsin} with the function 
\begin{equation}\label{eq:examplesin}
F(x,y)=\pi^2\sin(\pi x)\sin(\pi y) \quad\text{with}\quad K(s,t)= \int_0^s\int_0^tF(x,y)\d x\d y = \left(
    \cos(\pi s)-1
\right)\left(
    \cos(\pi t)-1
\right).
\end{equation}
\vspace{-5ex}
\begin{figure}[!ht]
        \centering
        \includegraphics[height=.225\linewidth]{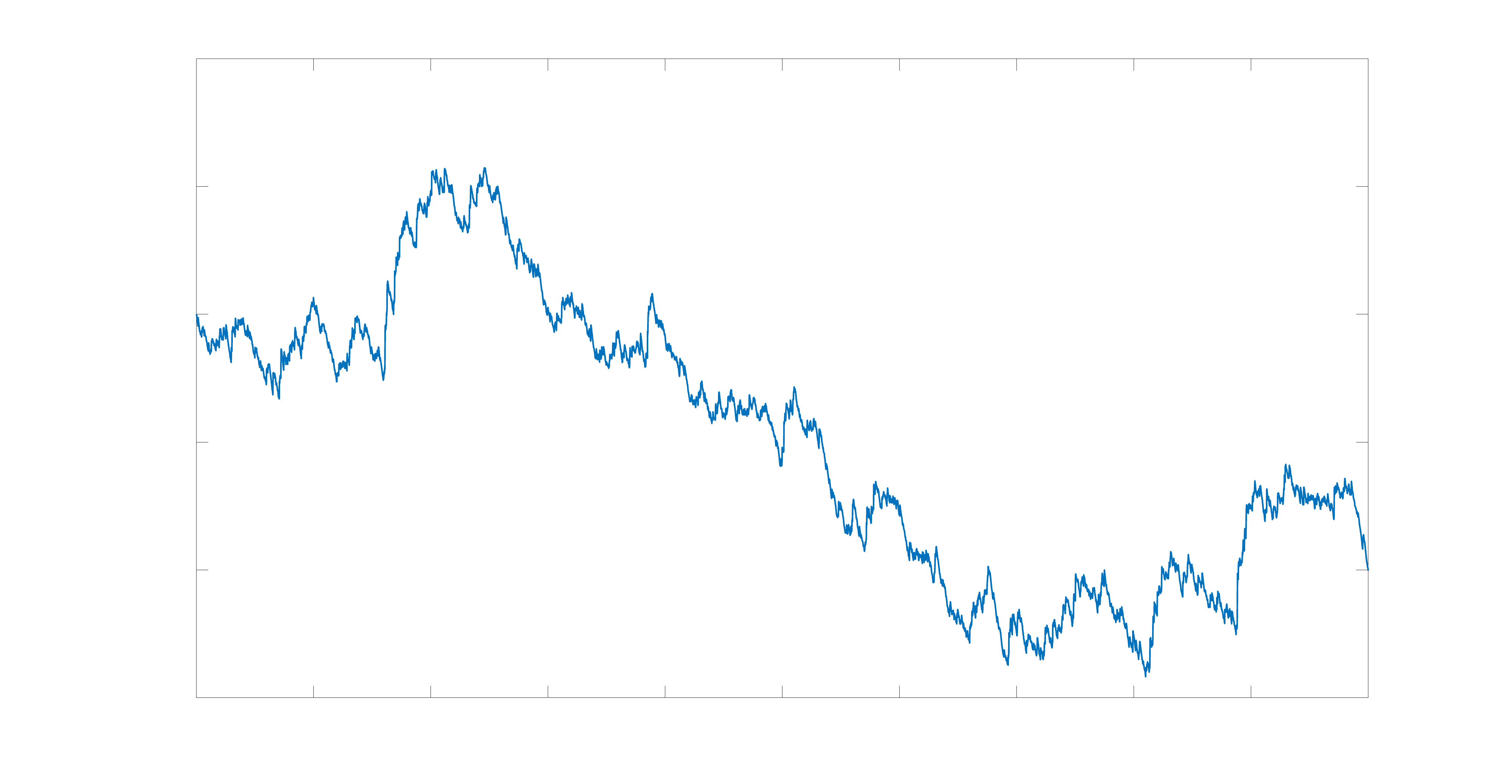}
        \includegraphics[width=.3\linewidth]{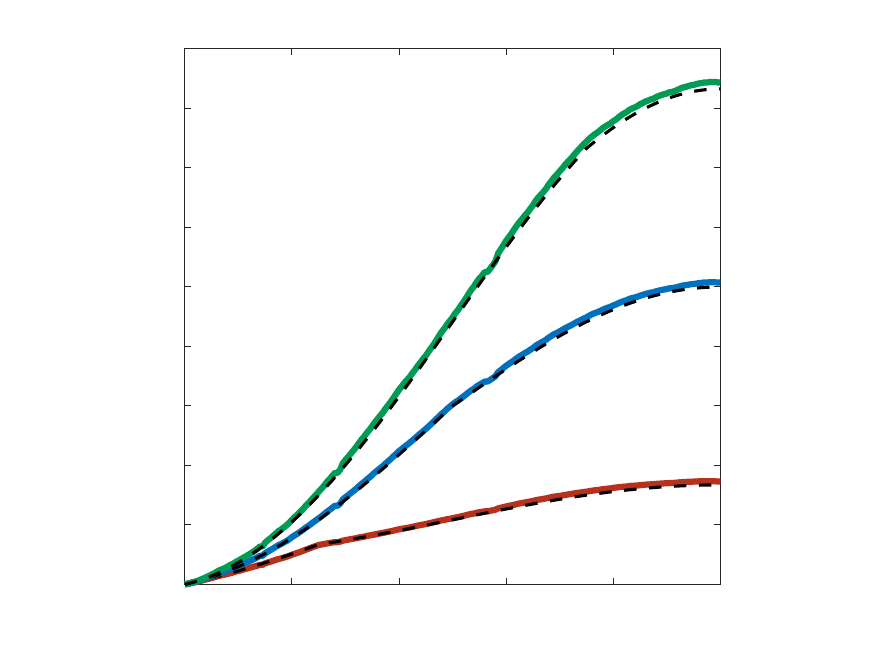}
    \caption{Sample of $\X_{k,k}^\A\xrightarrow{}\mathbf{B}+\G$ with $\G$ a centered Gaussian process with covariance given by \eqref{eq:examplesin} for $\u_k$ the middle eigenvector of a Rademacher random matrix on the left. Empirical covariance functions $K(s,t)$ at times $s=\frac{1}{4},\frac{1}{2},$ and $\frac{3}{4}$ on the right.}\label{fig:BMsin}
\end{figure}
\paragraph{Separable covariance.} We give an explicit construction so that our process $\X^\A_{k,\ell}$ converges in distribution to a centered Gaussian process with covariance function given by 
\[
K(s,t) = \int_0^\infty f_u(s)f_u(t)\d u,
\]
which is the case for some usual Gaussian processes such as the Ornstein--Uhlenbeck process. For this setting, we can construct a traceless family of observables in the following way. If $(\mathbf{e}_\alpha)_{\alpha=1}^N$ is an orthonormal basis, say the standard basis of $\Rbb^N$, then we define,
\begin{equation}\label{eq:defAOU}
A_{t} = \sum_{\alpha=1}^{\lfloor \frac{N}{2}\rfloor}f_{\frac{\alpha}{N}}(t)B_\alpha
\quad\text{with}\quad 
B_\alpha =\frac{1}{\sqrt{2}}\left( \mathbf{e}_{2\alpha}\mathbf{e}_{2\alpha}^\top-\mathbf{e}_{2\alpha-1}\mathbf{e}_{2\alpha-1}^\top\right)\quad\text{for}\quad\alpha\in\unn{1}{\left\lfloor\frac{N}{2}\right\rfloor}.
\end{equation}
Since $B_\alpha$ is traceless for every $\alpha$ then $A_t$ is too. Besides, if $\sup_{u>0}\Vert f_u\Vert_\infty<\infty$, then the spectral norm of $A_t$ is bounded and we have 
\[
\langle A_tA_s\rangle 
=
\frac{1}{N}\sum_{\alpha,\beta=1}^{\lfloor \frac{N}{2}\rfloor}
f_{\frac{\alpha}{N}}(t)f_{\frac{\beta}{N}}(s)\Tr\left(B_\alpha B_\beta\right)
=
\frac{1}{N}\sum_{\alpha=1}^Nf_{\frac{\alpha}{N}}(t)f_{\frac{\alpha}{N}}(s)
\xrightarrow[N\to\infty]{}\int_0^\infty f_u(s)f_u(t)\d u.
\] 
We illustrate this construction for the Ornstein--Uhlenbeck process in Figure \ref{fig:OU} for which we consider the family of functions $f_u(t) = \sigma\e^{-\theta (t-u)}\mathds{1}_{t\geqslant u}$ with $\sup_{u>0}\Vert f_u\Vert_\infty = 1$. Indeed,
\[
\int_0^\infty f_u(s)f_u(t)\d u = \frac{\sigma^2}{2\theta}\left(
\e^{-\theta\vert t-s\vert}-\e^{-\theta(t+s)}
\right)\eqqcolon K_{\mathrm{OU}}(s,t).
\]
\vspace{-5ex}
\begin{figure}[!ht]
        \centering
        \includegraphics[height=.19\linewidth]{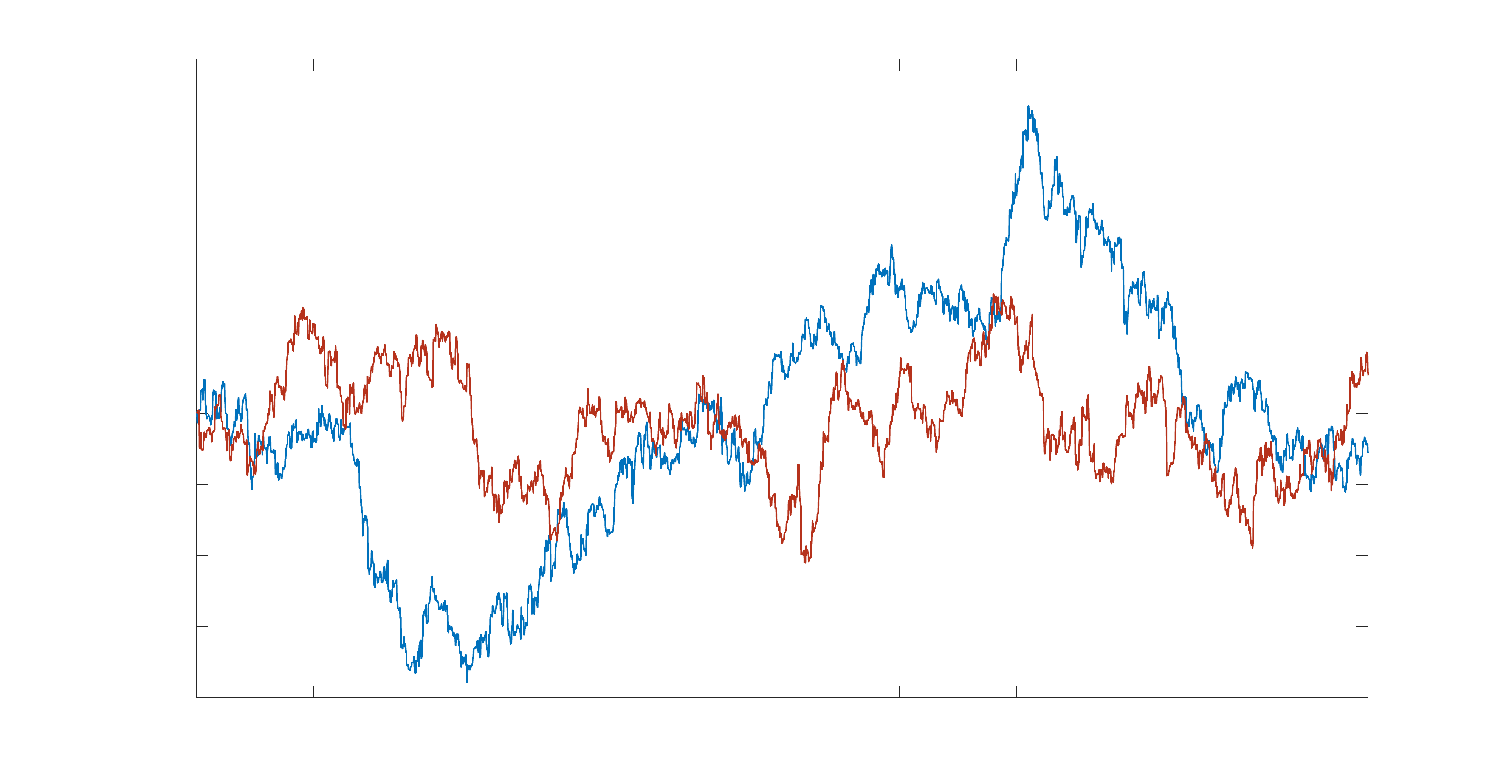}
        \includegraphics[width=.25\linewidth]{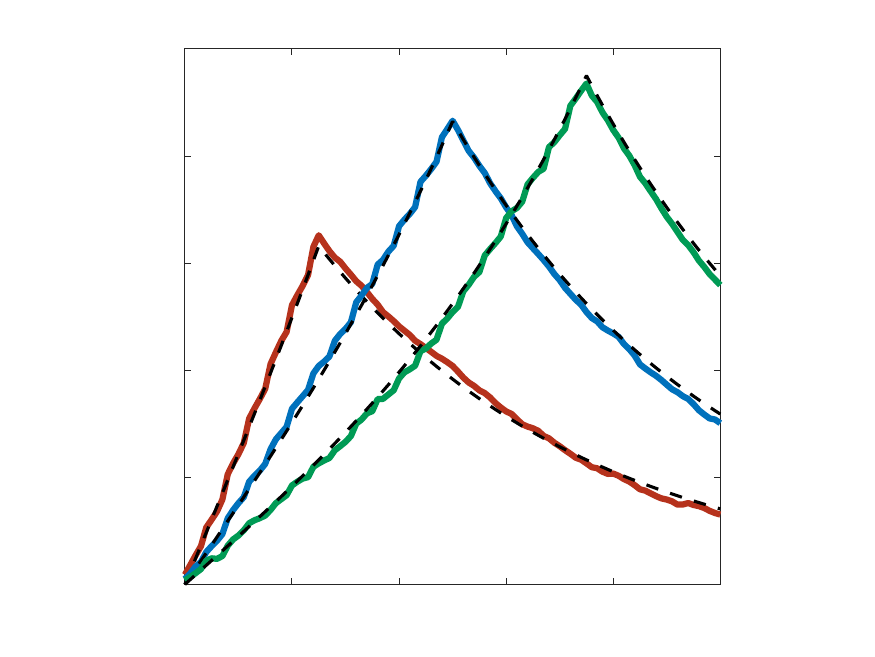}
        \includegraphics[width=.25\linewidth]{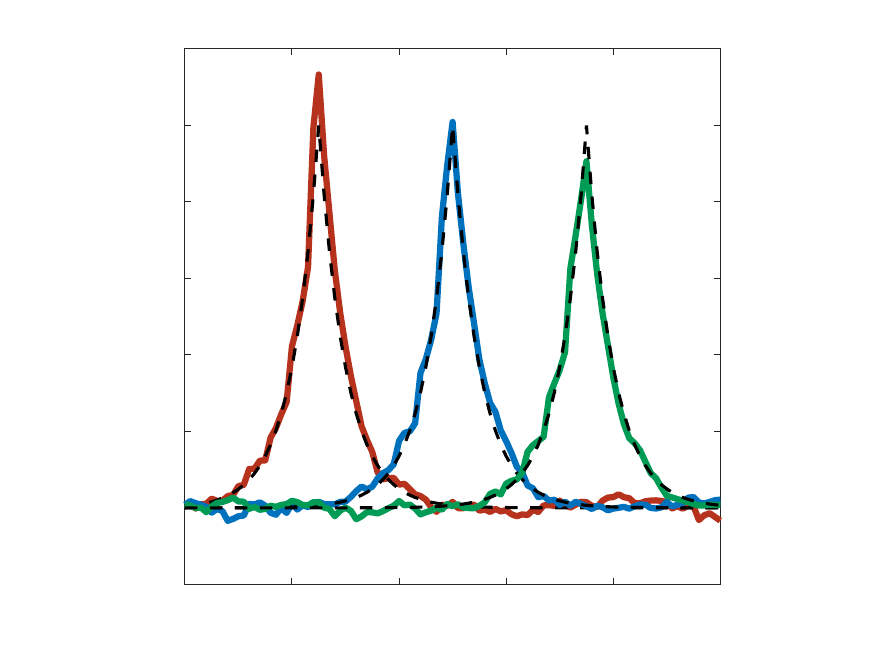}
    \caption{Sample of $\X_{k,k}^\A$ for $\A$ defined as in \eqref{eq:defAOU} for $(\theta,\sigma)=(2,1)$ in blue and $(\theta,\sigma)=(20,1)$ in red for $\u_k$ the middle eigenvector of a Rademacher random matrix on the left. Empirical covariance functions $K(s,t)$ at times $s=\frac{1}{4},\frac{1}{2},$ and $\frac{3}{4}$ for $(\theta,\sigma)=(2,1)$ in the center and $(\theta,\sigma)=(20,1)$ on the right.}\label{fig:OU}
\end{figure}

\paragraph{General covariance kernel.} For general covariance kernel, we can use the construction of the proof of Proposition \ref{prop:main2} using the Karhunen--Lo\`eve decomposition of the process. We illustrate this construction using fractional Brownian motion. We note that the usual fractional Brownian motion with Hurst parameter $H$ defined as the centered Gaussian process with covariance 
\[
K_{\mathrm{fBM}}(s,t)=\frac{1}{2}\left(
    t^{2H}+s^{2H}-\vert t-s\vert^{2H}
\right)
\]
does not have, to our knowledge, an explicit Karhunen--Lo\`eve decomposition besides the Brownian motion at $H=\frac{1}{2}.$ In Figure \ref{fig:fBM}, we numerically compute the eigenvalues and eigenfunctions of the process. 
\begin{figure}[!ht]
        \centering
        \includegraphics[height=.19\linewidth]{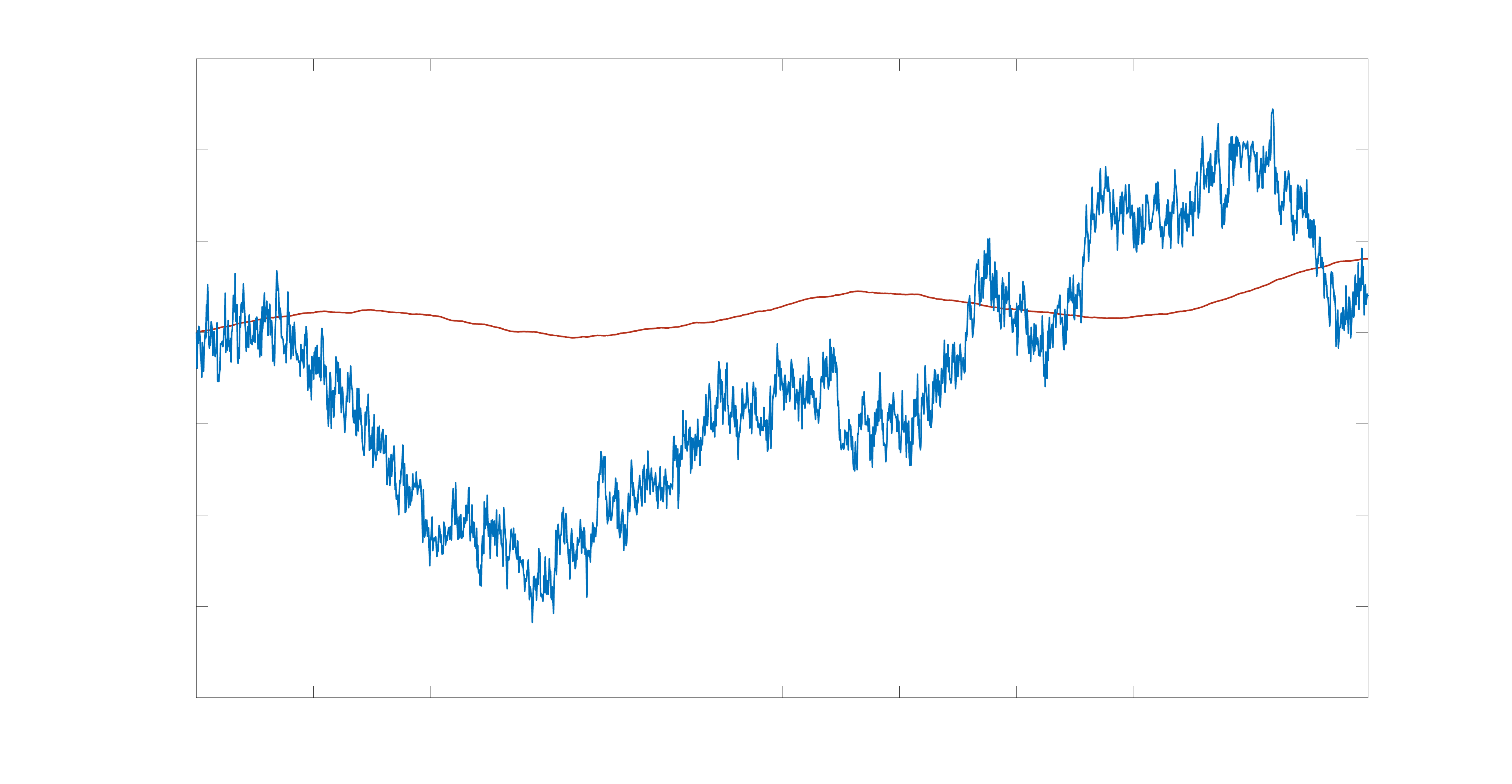}
        \includegraphics[width=.25\linewidth]{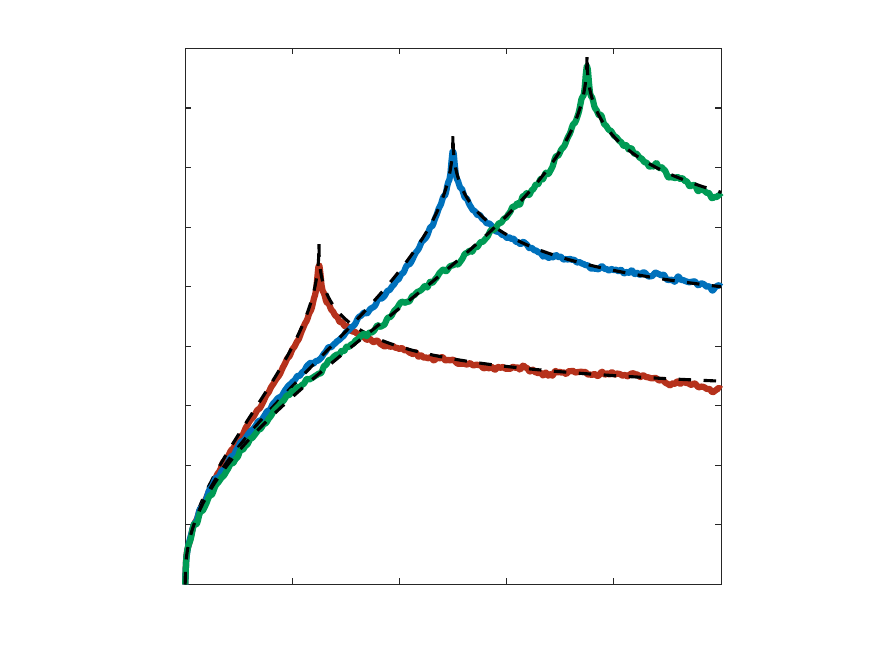}
        \includegraphics[width=.25\linewidth]{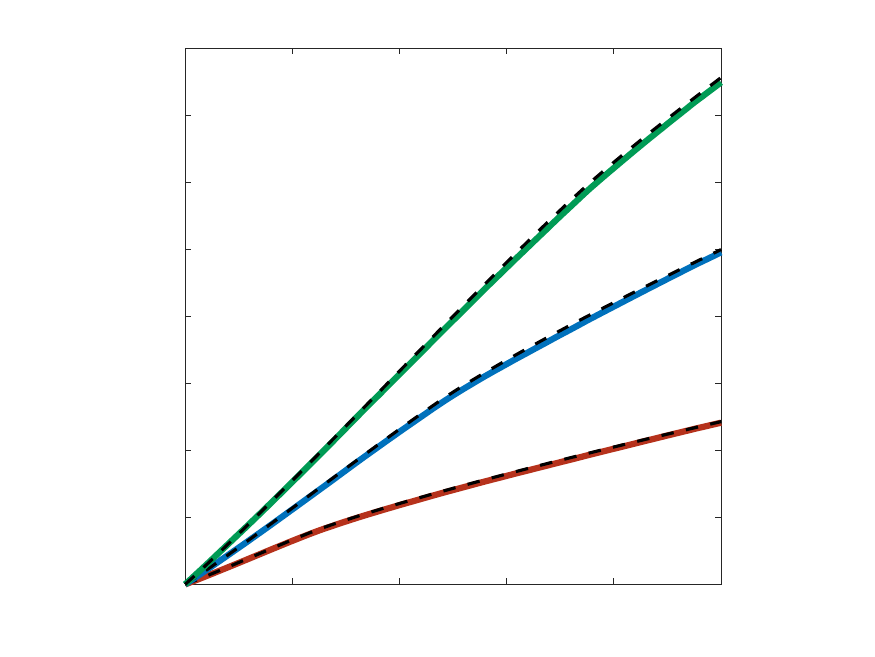}
    \caption{Sample of $\X_{k,k}^\A$ for $\A$ defined using a numerical approximation of the Karhunen--Lo\`eve decomposition of fractional Brownian motion for $H=\frac{1}{5}$ in blue and $H = \frac{9}{10}$ in red for $\u_k$ the middle eigenvector of a Rademacher random matrix on the left. Empirical covariance functions $K(s,t)$ at times $s=\frac{1}{4},\frac{1}{2},$ and $\frac{3}{4}$ for $H = \frac{1}{5}$ in the center and $H = \frac{9}{10}$ on the right.}\label{fig:fBM}
\end{figure}

The first version of fractional Brownian motion, introduced by L\'evy as the Riemann-Liouville integral of a standard Brownian motion $\mathbf{B}$, is given by
\[
B^{H}_t = \frac{1}{\Gamma\left(H+\frac{1}{2}\right)}\int_0^t \left(t-s\right)^{H-\frac{1}{2}}\d B_s
\quad \text{with}\quad
\E\left[
    B^H_tB^H_s
\right]
=
\frac{(s\wedge t)^{2H}}{2H\Gamma\left(H+\frac{1}{2}\right)^2}.
\]
This process has an explicit Karhunen--Lo\`eve decomposition \cite{maccone}. In Figure \ref{fig:RLfBM}, we leverage this explicit decomposition to construct our process as in the proof of Proposition \ref{prop:main2}. 
\begin{figure}[!ht]
        \centering
        \includegraphics[height=.19\linewidth]{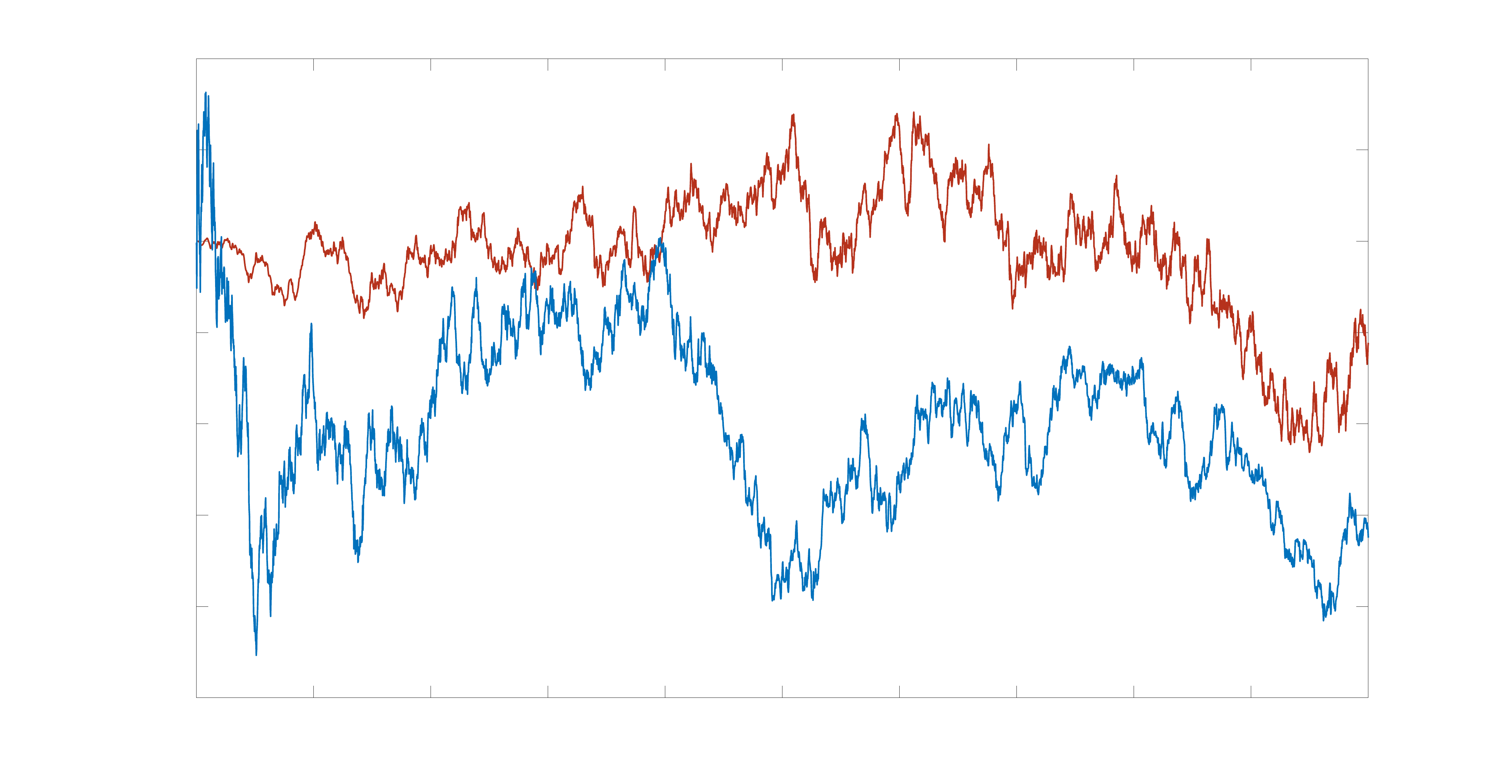}
        \includegraphics[width=.25\linewidth]{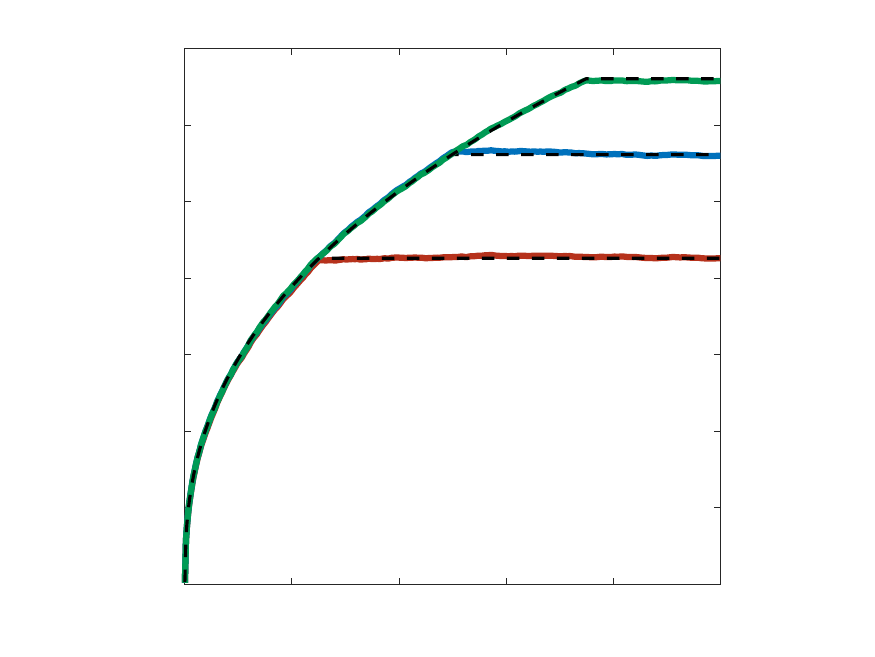}
        \includegraphics[width=.25\linewidth]{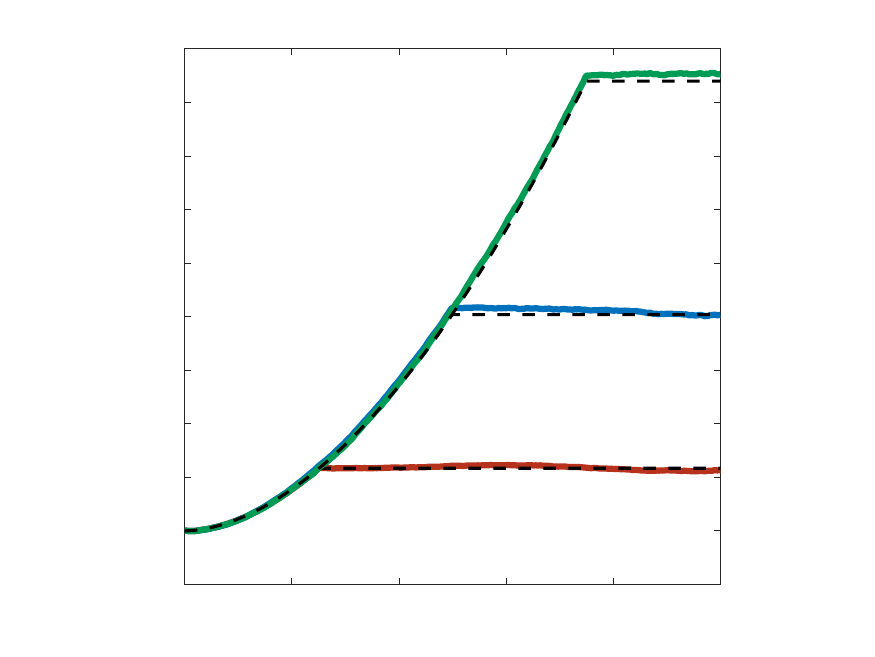}
    \caption{Sample of $\X_{k,k}^\A$ for $\A$ defined using the Karhunen--Lo\`eve decomposition \eqref{eq:KLfBM} for $H=\frac{1}{5}$ in blue and $H = \frac{9}{10}$ in red for $\u_k$ the middle eigenvector of a Rademacher random matrix on the left. Empirical covariance functions $K(s,t)$ at times $s=\frac{1}{4},\frac{1}{2},$ and $\frac{3}{4}$ for $H = \frac{1}{5}$ in the center and $H = \frac{9}{10}$ on the right.}
    \label{fig:RLfBM}
\end{figure}

\noindent We now describe the decomposition of the process. First denote $J_\nu$ the Bessel functions of the first kind with 
\[
\nu = \frac{2H}{2H+1},\quad 
J_\nu(x) = \sum_{n=0}^{+\infty} \frac{(-1)^n}{n!\Gamma(n+\nu+1)}\left(\frac{x}{2}\right)^{2n+\nu}
\] 
and $\gamma_1< \gamma_2<\dots$ defined as $J_{\nu-1}(\gamma_n)=0$, the zeros of Bessel functions. Then the Karhunen--Lo\`eve decomposition of $\mathbf{B}^H$ is given as 
\begin{equation}\label{eq:KLfBM}
\lambda_k^{\mathrm{fBM}} = \frac{1}{\gamma_k^2\Gamma\left(H+\frac{3}{2}\right)^2},\quad \psi^{\mathrm{fBM}}_k(t) = 
\frac{\sqrt{2H+1}\gamma_k}{\sqrt{\gamma_k^2J_\nu'(\gamma_k)^2+(\gamma_k^2-\nu^2)J_\nu(\gamma_k)^2}}
t^{H}J_\nu\left(\gamma_nt^{H+\frac{1}{2}}\right).
\end{equation}

\bibliographystyle{abbrv}
\bibliography{biblio}
\end{document}